\numberwithin{equation}{section}
\newtheorem{theorem}{Theorem}[section]
\newtheorem{lemma}[theorem]{Lemma}
\newtheorem{proposition}[theorem]{Proposition}
\newtheorem{remark}[theorem]{Remark}
\newtheorem{definition}[theorem]{Definition}
\newtheorem{corollary}[theorem]{Corollary}
\newtheorem{conjecture}[theorem]{Conjecture}
\newcommand{\barredsum}{%
  \DOTSB\mathop{\mathpalette\@barredsum\relax}\slimits@
}
\newcommand{\@barredsum}[2]{%
  \begingroup
  \sbox\z@{$#1\sum$}%
  \setlength{\unitlength}{\dimexpr2pt+\ht\z@+\dp\z@\relax}%
  \@barredsumthickness{#1}%
  \vphantom{\@barredsumbar}%
  \ooalign{$\m@th#1\sum$\cr\hidewidth$#1\@barredsumbar$\hidewidth\cr}%
  \endgroup
}
\newcommand{\@barredsumbar}{%
  \vcenter{\hbox{\begin{picture}(0,1)\roundcap\Line(0,0)(0,1)\end{picture}}}%
}
\newcommand{\@barredsumthickness}[1]{
  \linethickness{%
    1.25\fontdimen8
      \ifx#1\displaystyle\textfont\else
      \ifx#1\textstyle\textfont\else
      \ifx#1\scriptstyle\scriptfont\else
      \scriptscriptfont\fi\fi\fi 3
  }%
}
\newcommand{\al}{\alpha}
\newcommand{\be}{\beta}
\newcommand{\ga}{\gamma}
\newcommand{\Ga}{\Gamma}
\newcommand{\de}{\delta}
\newcommand{\e}{\varepsilon}
\newcommand{\la}{\lambda}
\newcommand{\Si}{\Sigma}
\newcommand{\vp}{\varphi}
\newcommand{\om}{\omega}
\newcommand{\Om}{\Omega}
\newcommand{\cs}{\mathcal S}
\newcommand{\cp}{\mathcal P}
\newcommand{\ce}{\mathcal E}
\newcommand{\cl}{\mathcal L}
\newcommand{\wh}{\widehat}
\newcommand{\ZR}{\mathbb{R}}
\newcommand{\ZT}{\mathbb{T}}
\newcommand{\ZZ}{\mathbb{Z}}
\newcommand{\ZC}{\mathbb{C}}
\newcommand{\ZN}{\mathbb{N}}
\newcommand{\Id}{{\bf{1}}}
\newcommand{\cT}{{\mathcal T}}
\newcommand{\cC}{{\mathcal C}}
\newcommand{\SL}{{\rm{SL}}}
\newcommand{\ZH}{\mathbb H}
\newcommand{\dist}{{\rm dist}}
\def\l{\ell}
\begin{document}

\title{A Kakeya maximal estimate for regulus strips}

\author{Shukun Wu}

\address{Department of Mathematics, Indiana University Bloomington}

\email{shukwu@iu.edu}

\date{}
\begin{abstract}

We prove Kakeya-type estimates for regulus strips. 
As a result, we obtain another epsilon improvement over the Kakeya conjecture in $\ZR^3$, by showing that the regulus strips in the ${\rm SL}_2$ example are essentially disjoint.
We also establish an $L^p$ inequality regarding a Nikodym-type maximal function in the first Heisenberg group.

\end{abstract}

\maketitle

\section{Introduction}

Recall that a Kakeya set in $\ZR^n$ is a compact set $K\subset\ZR^n$ that contains a unit line segment pointing in every direction.
The three-dimensional Kakeya conjecture asserts that every Kakeya set in $\ZR^3$ has Hausdorff dimension 3.
After standard discretization arguments, it is a consequence of the following statement:
\begin{conjecture}
\label{Kakeya-discretized}
For all $\e>0$, there exists $c_\e>0$ and $M=M(\e)>0$ such that the following is true for all $\de\in(0,1)$:
Let $L$ be a family of directional $\de$-separated lines and $E\subset[0,1]^3$ be a union of $\de$-balls.
Suppose $|\ell\cap E|\geq\la$ for all $\ell\in L$.
Then
\begin{equation}
\label{Kakeya-esti}
    |E|\geq c_\e \de^{\e}\la^M(\de^2\# L).
\end{equation}
\end{conjecture}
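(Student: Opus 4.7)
The plan is to attack Conjecture~\ref{Kakeya-discretized} by combining a multi-scale structural dichotomy with the regulus-strip Kakeya maximal estimate that is the main technical contribution of this paper. The overall shape is an induction on scales in the spirit of the Katz--\L aba--Tao sticky Kakeya framework: reduce to a ``sticky'' configuration that looks self-similar at every scale $\rho \in [\delta,1]$, then at each scale run a structural classification of the concentration pattern of the $\rho$-tubes, and show that every possible structure forces volume $\gtrsim \delta^\epsilon \lambda^M (\delta^2 \# L)$.

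Concretely, I would first apply the standard two-ends reduction to upgrade $|\ell \cap E| \gtrsim \lambda$ to a robust mass-on-tube statement, then pigeonhole over the multiplicity function $\mu = \sum_\ell \1_{T_\ell}$ to fix a dyadic level set $E_\mu \subset E$. A sticky refinement then gives, at each scale $\rho \in [\delta,1]$, a grouping of $L$ into $\rho$-tubes with a uniform multiplicity and cardinality profile. On this refined family I would run polynomial partitioning of bounded degree: inside cells the problem decouples and recursion on $\# L$ closes, so the real issue is the wall, on which the tubes concentrate near a low-degree algebraic surface $Z$.

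The structural dichotomy then runs through the classification of ruled surfaces in $\mathbb{R}^3$. If $Z$ is (close to) a plane, cylinder, or cone, the problem reduces fiberwise to the $2$D Kakeya theorem, which is sharp and poses no difficulty. The genuinely hard case is the doubly ruled quadric, where the extremal configuration is the SL$_2$ example. Here the two rulings cut $Z$ into $\delta^{1/2}$-thick regulus strips --- twisted $1 \times \delta^{1/2} \times \delta$ curved slabs --- each foliated by a subfamily of tubes of $L$. This is exactly the geometry for which the paper's maximal estimate is designed, and applying it shows the regulus strips are essentially disjoint. Summing the contributions of the strips then gives $|E| \gtrsim \lambda^{O(1)}(\delta^2 \# L)$ in the SL$_2$ case, matching \eqref{Kakeya-esti}.

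The main obstacle I anticipate is closing the induction without losing $\log(1/\delta)^{O(1)}$ factors at each of the $\log(1/\delta)$ many scales, which would destroy the $\delta^\epsilon$ gain. For this one needs the structural classification at each scale to be \emph{final}: once regulus/SL$_2$ structure is detected at some scale, the regulus-strip estimate must absorb it without kicking the problem back into the recursion. Verifying this absorption --- and handling scales at which the algebraic structure of $Z$ changes (e.g.\ a quadric at one scale that coarsens to a plane at another, or lines that belong to several distinct quadrics at different scales) --- is the real technical heart of the argument; the rest is bookkeeping of the induction-on-scales machinery.
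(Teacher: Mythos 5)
The statement you are trying to prove is stated in the paper as a \emph{conjecture}, not a theorem: the paper does not prove it, and (as the paper itself records) the best known lower bound at the time of writing is Hausdorff dimension $5/2+\e_0$ from Katz--Zahl. The paper's actual contribution is much narrower: a maximal estimate for families of regulus strips \emph{satisfying the two-dimensional ball condition} of Definition~\ref{two-dim-condition}, which disposes of one specific enemy scenario (the $\mathrm{SL}_2$ example). So there is no ``paper's proof'' to compare against, and your proposal must be judged as a standalone attempt at an open problem.

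As such, it is a program outline rather than a proof, and the decisive steps are missing. First, the claim that ``inside cells the problem decouples and recursion on $\#L$ closes'' and that the plane/cylinder/cone cases ``reduce fiberwise to the $2$D Kakeya theorem'' is exactly the plany/grainy structural analysis of Katz--\L aba--Tao and Katz--Zahl; carried out rigorously it yields only $5/2+\e_0$, not \eqref{Kakeya-esti}, and nothing in your sketch explains how to do better. Second, in the doubly ruled case you cannot simply ``apply the paper's maximal estimate'': Theorem~\ref{r-strip-thm} requires the non-concentration hypothesis \eqref{two-dim-ball} on the parameter set $W\subset\ZR^4$ of the strips, and a generic Kakeya configuration concentrated near a quadric gives you no control over how the resulting regulus strips distribute in parameter space (many tubes may cluster into few strips, or many strips may cluster in an $r$-ball of $\ZR^4$, precisely the situations the paper shows are \emph{not} essentially disjoint). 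Deriving the two-dimensional ball condition from the directional separation of $L$ is itself a substantial open structural problem. Finally, you correctly identify the absorption/loss-per-scale issue as ``the real technical heart'' and then leave it unaddressed; that is precisely the point at which all known attempts along these lines have stalled. The proposal therefore contains a genuine gap at every one of its load-bearing steps.
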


\noindent When $\la\sim 1$, \eqref{Kakeya-esti} can be rephrased as the following: 
Suppose $\cT$ is a family of $\de$-tubes pointing at $\de$-separated directions.
Then $|\cup_\cT|\gtrapprox \de^2\#\cT$ ($A\lessapprox B$ means $A\leq C_\e\de^{-\e}B$ for all $\e>0$).
In other words, the $\de$-tubes in $\cT$ are essentially disjoint.

\smallskip

The best-known result regarding Conjecture \ref{Kakeya-discretized} was obtained by Katz-Zahl \cite{Katz-Zahl-1}, who showed that a three-dimensional Kakeya set must have Hausdorff dimensions greater than $5/2+\e_0$ for some absolute $\e_0>0$.
In the same paper, an enemy scenario of the Kakeya conjecture called the ``$\text{SL}_2$ example" was discovered.
To describe this example, we briefly go through the definition of a regulus strip.

Let $\de\in(0,1)$. 
A regulus strip is the intersection of the $\de$-neighborhood of a regulus (a quadratic surface in $\ZR^3$ doubly-ruled by lines) and the $\de^{1/2}$-neighborhood of a line contained within that regulus.
In other words, a regulus strip is a $\de\times\de^{1/2}\times1$-plank twisting at a constant speed.
Note that a single regulus strip contains $\sim\de^{-1/2}$ many $\de$-tubes.

The $\text{SL}_2$ example, roughly speaking, contains around $\de^{-3/2}$ well-spaced regulus strips. 
Hence, it also contains approximately $\de^{-2}$ many $\de$-tubes.
Thus, if a Kakeya phenomenon is expected for the $\text{SL}_2$ example, its measure should be $\gtrapprox1$.
It was shown in \cite{Katz-Zahl-1} that the $\text{SL}_2$ example has a measure strictly bigger than $\de^{1/2}$, which is a key ingredient of their $5/2+\e_0$-bound for the Kakeya conjecture in $\ZR^3$.

\smallskip

The $\text{SL}_2$ example motivates the study of the $\text{SL}_2$-Kakeya set in \cite{FO} and \cite{KWZ}.
An $\text{SL}_2$-Kakeya set is a Kakeya set whose unit line segments come from the $\SL_2$-lines $\cl_{\text{SL}_2}:=\{\ell_{(a,b,c,d)}: ad-bc=1\}$. 
In particular, a certain point-line duality regarding $\text{SL}_2$-lines was observed in \cite{FO}, and the authors there used tools from Fourier analysis developed in \cite{GGGHMW} to study $\text{SL}_2$-Kakeya sets.
We remark that both the $\text{SL}_2$ example and the $\text{SL}_2$-Kakeya set are useful models of ``plany" Kakeya sets.
For further discussion, we refer to \cite{Katz-Zahl-1} and \cite{KWZ}.

\smallskip

In this paper, we apply similar tools from Fourier analysis to study regulus strips.
Our result can be viewed as a Kakeya maximal estimate for regulus strips.
As a consequence, we show that the measure of the $\text{SL}_2$ example is $\gtrapprox1$, which results in an epsilon improvement over the epsilon improvement of the Kakeya conjecture in $\ZR^3$ obtained in \cite{Katz-Zahl-1}.

\medskip

To give the formal definition of regulus strips, let us first recall a standard parameterization of non-horizontal lines in $\ZR^3$.

\begin{definition}
\label{line-para}
For a line $\l_{(a,b,c,d)}:(a,b,0) + \operatorname{span}(c,d,1)$ in $\ZR^3$, we define $\breve \l:=(a,b,c,d)$ to be a point in the parameter space $\ZR^4$. 
Generally, if $S=\bigcup\l\cap[0,1]^3$ is a union of line segments in $\ZR^3$, define $\breve S=\bigcup\breve\l$ as a set in $\ZR^4$.
\end{definition}

For each $\ell\in \cl_{\text{SL}_2}$ and each $p=(p_1,p_2,p_3)\in\ell$, direct calculation shows that all the lines $\ell'\in \cl_{\text{SL}_2}$ intersecting $p$ are trapped in a plane $P_p$ containing $\ell$, whose normal direction is $(p_2,-p_1,1)$ (see (1.3) in \cite{KWZ}).
Let $\ell(p)=p+\text{span}(p_1,p_2,0)$ be the intersection of $P_p$ and the horizontal plane at height $p_3$.
This defines a regulus $R(\ell)$ as 
\begin{equation}
\label{regulus-first-para}
    R(\ell):=\bigcup_{p\in\ell}\ell(p).
\end{equation}
The regulus $R(\ell)$ is a double-ruled  surface. 
The family of lines $\{\ell(p): p\in\ell\}$ forms one ruling of $R(\ell)$.
Note that $\{\ell':\breve\ell'\in\operatorname{span}(\breve\ell)\}$ forms another ruling of $R(\ell)$ (just need to check that such an $\ell'$ intersects $\ell(p)$ for all $p\in\ell$).

\begin{definition}[Regulus strip]
Let $\ell\in\cl_{\text{SL}_2}$. 
Define a {\bf regulus strip} $S=S(\ell):=N_\de(R(\ell))\cap N_{\de^{1/2}}(\ell)\cap[0,1]^3$, which is a twisted plank of dimensions $\de\times\de^{1/2}\times1$. 

\end{definition}

Since $\{\ell':\breve\ell'\in\operatorname{span}(\breve\ell)\}$ forms a ruling of $R(\ell)$ , $\breve S(\ell)$ is the $\de$-neighborhood of the $\de^{1/2}$ line segment $\{(1+u)\breve\ell: |u|\leq\de^{1/2}\}$ in the parameter space $\ZR^4$.
In other words, $\breve S(\ell)$ is a $\de$-tube of length $\de^{1/2}$.

\smallskip

After suitable affine transformations, we only need to focus on the following family of regulus strips
\begin{equation}
\label{family-regulus-strip}
    \bar\cs:=\{S(\ell_{(a,b,c,d)}):a,d\sim1,ad-bc=1\}.
\end{equation}
The next definition captures a non-concentration property of the regulus strips.

\begin{definition}
\label{two-dim-condition}
Let $\cs\subset \bar\cs$ be a family of regulus strips and let $W=\bigcup_{S\in\cs}\breve S\subset\ZR^4$ be a union of $\de$-tubes of length $\de^{1/2}$. 
We say $\cs$ obeys the {\bf two-dimensional ball condition} if for any $r\in[\de,1]$ and any four-dimensional $r$-ball $B_r$, 
\begin{equation}
\label{two-dim-ball}
    |W\cap B_r|\de^{-4}\leq 100 (r\de^{-1})^2.
\end{equation}
\end{definition}

Note that when $W$ is a general union of $\de$-balls, \eqref{two-dim-ball} is the non-concentration inequality that forces $W$ to obey a standard two-dimensional ball condition.
Moreover, it is also the minimal requirement for a family of regulus strips $\cs$ to be essentially disjoint, that is, $|\cup_\cs|\gtrapprox(\de^{3/2}\#\cs)$. 

To see this, suppose $\cs$ is a family of regulus strips that fails \eqref{two-dim-ball}.
Then there exists an absolute $\e>0$ and an $r$-ball $B_r$ such that $|W\cap B_r|\de^{-4}\geq \de^{-\e} (r\de^{-1})^2$ for some absolute $\e>0$.
Assume $\cs=\cs(B_r):=\{S\in\cs:\breve S\cap B_r\not=\varnothing\}$.
We will show $|\cup_\cs|\lesssim \de^{\e}(\de^{3/2}\#\cs)$.

If $r\geq\de^{1/2}$, then $\cs\subset\{S\in\bar\cs:\breve S\subset 2B_r\}$. 
This shows that $\cup_{\cs}$ is contained in an $r$-tube, so $|\cup_{\cs}|\lesssim r^2$.
Note that $\#\cs=\#\cs(B_r)\gtrsim \de^{-\e} (r\de^{-1})^2\de^{1/2}$.
Thus, we get $(\de^{3/2}\#\cs)\gtrsim\de^{-\e}r^2\gtrsim \de^{-\e}|\cup_{\cs}|$.
If $r\leq\de^{1/2}$, then $\cup_{\cs}$ is contained in $N_{2r}(S)$, where $S\in\cs$ is any regulus strip.
This implies $|\cup_{\cs}|\lesssim r\de^{1/2}$.
Note that $\#\cs=\#\cs(B_r)\gtrsim \de^{-\e} r\de^{-1}$.
Thus, we have $(\de^{3/2}\#\cs)\gtrsim\de^{-\e}r\de^{1/2}\gtrsim\de^{-\e}|\cup_{\cs}|$.

\medskip

Now we state the main result of the paper.

\begin{theorem}
\label{r-strip-thm}
Suppose $\cs\subset \bar\cs$ is a collection of regulus strips satisfying the two-dimensional ball condition \eqref{two-dim-ball}. 
Let $Y:\cs\to[0,1]^3$ be
a shading such that $Y(S)\subset S\cap[0,1]^3$ for any $S\in\cs$.
Let $\la\in(0,1]$.
Suppose $|Y(S)|\geq \la |S|$ for all $S\in\cs$.
Then for any $\e>0$,
\begin{equation}
    \Big|\bigcup_{S\in\cs}Y(S)\Big|\geq c_\e\de^{\e}\la^6\sum_{S\in\cs}|S|.
\end{equation}
\end{theorem}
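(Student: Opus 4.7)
The plan is to reduce Theorem \ref{r-strip-thm} to the second-moment bound
\begin{equation}
\label{L2-goal}
\sum_{S,S'\in\cs} |Y(S)\cap Y(S')| \lessapprox \lambda^{-4}\,\delta^{3/2}\,\#\cs.
\end{equation}
Once \eqref{L2-goal} is established, set $f=\sum_{S\in\cs}\mathbf{1}_{Y(S)}$. Since $\|f\|_1=\sum_S|Y(S)|\geq\lambda\,\delta^{3/2}\,\#\cs$ and $\mathrm{supp}(f)\subset\bigcup_{S\in\cs}Y(S)$, the Cauchy-Schwarz inequality yields
$$\Big|\bigcup_{S\in\cs}Y(S)\Big| \;\geq\; \frac{\|f\|_1^2}{\|f\|_2^2} \;\gtrapprox\; \frac{\lambda^2\,(\delta^{3/2}\#\cs)^2}{\lambda^{-4}\,\delta^{3/2}\#\cs} \;=\; \lambda^6\,\delta^{3/2}\,\#\cs,$$
which is the desired bound (using $|S|\sim\delta^{3/2}$).

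To establish \eqref{L2-goal}, I would lift the problem to the parameter space $\ZR^4$. Each regulus strip $S(\ell)$ corresponds to the $\delta\times\delta^{1/2}$-tube $\breve S\subset\ZR^4$ along the radial direction $\breve\ell$; the set of $\SL_2$-lines through a point $p\in[0,1]^3$ sweeps out a $2$-dimensional affine subspace $A_p\subset\{ad-bc=1\}$. Modulo $\delta$-thickenings, $p\in Y(S)$ iff $A_p\cap\breve S\neq\varnothing$, so each pairwise overlap $|Y(S)\cap Y(S')|$ becomes an incidence count between $2$-planes $A_p$ and pairs of tubes $(\breve S,\breve{S'})$. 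The two-dimensional ball condition \eqref{two-dim-ball} is exactly the natural non-concentration hypothesis for such an incidence count in parameter space.

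The core of the argument is a Fourier-analytic high-low decomposition in $\ZR^4$ at dyadic scales $\rho\in[\delta,1]$. At each $\rho$, split each lifted function $\mathbf{1}_{\breve S}$ into a low-frequency part $\mathbf{1}_{\breve S}*\eta_\rho$ (with $\eta_\rho$ a bump at scale $\rho$) and a high-frequency part $\mathbf{1}_{\breve S}-\mathbf{1}_{\breve S}*\eta_\rho$. For the high part, Plancherel combined with \eqref{two-dim-ball} bounds the cumulative Fourier mass: tubes clustering on a $\rho$-ball have total multiplicity $\lesssim(\rho/\delta)^2$, yielding the requisite orthogonal sum estimate. For the low part, one uses an $L^\infty$ bound at a coarser scale together with the shading condition to gain factors of $\lambda^{-1}$. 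Balancing the two contributions at an optimal $\rho$ and summing a geometric series over dyadic scales produces the $\lambda^{-4}$ loss, following the strategy used for analogous \emph{plany} Kakeya problems in \cite{GGGHMW,FO,KWZ}.

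The main obstacle is faithfully transferring the shading and the $L^2$ information between $\ZR^3$ and $\ZR^4$: the shading $Y(S)$ lives downstairs, while wave-packet orthogonality operates upstairs in parameter space, and the incidence map $p\mapsto A_p$ has a variable Jacobian on the quadric $\{ad-bc=1\}$. Overcoming this requires a refined wave-packet decomposition that respects both the regulus twist and the shading. Concretely, one decomposes each regulus strip into its $\sim\delta^{-1/2}$ constituent $\delta$-tubes, applies flat wave-packet orthogonality to each, and reassembles while verifying that the two-dimensional ball condition persists through the refinement.
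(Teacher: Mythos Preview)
Your outline diverges substantially from the paper's argument, and as written it has a real gap rather than just a different route.

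The paper does \emph{not} run an $L^2$ argument in the four-dimensional parameter space. Instead it (i) uses the ``$\SL_2$ is linear'' reduction (Lemma~\ref{sl2-is-linear}) to pass to a \emph{three}-dimensional parameter space, (ii) proves the key geometric Lemma~\ref{rs-wp} showing that each horizontal slice $X_t(\cs)$ is, up to constants, the union of $1\times\rho\times\delta$ planks tangent to a fixed cone (so regulus strips become conical wave packets), and then (iii) runs an $L^6$ high--low argument on these conical planks, importing the cone decoupling estimates of \cite{GGGHMW}. The exponent $6$ in $\lambda^6$ is exactly the $L^6$ exponent from decoupling; it is not obtained by balancing a high--low iteration at $L^2$. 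The argument is packaged as Proposition~\ref{decoupling-prop}, which produces a dichotomy: either the desired bound holds outright, or the configuration can be coarsened to $(\alpha\delta^{1-\e^2},\rho_o)$-regulus strips, and one closes by induction on the scale pair $(\delta,\rho)$ (Theorem~\ref{main-thm}).

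The specific gap in your proposal is the second-moment bound \eqref{L2-goal}: you assert that high--low in $\ZR^4$ plus Plancherel plus the two-dimensional ball condition yields it with loss $\lambda^{-4}$, but none of the three ingredients you name gives Fourier-side orthogonality for the lifted tubes $\breve S$. Condition \eqref{two-dim-ball} is a \emph{spatial} non-concentration statement; it does not separate the $\breve S$ in frequency, so the ``requisite orthogonal sum estimate'' for the high part is unsupported. You yourself flag the main obstacle (transferring the shading $Y(S)$ upstairs while keeping wave-packet orthogonality) and then do not resolve it; decomposing each strip into its $\delta^{-1/2}$ constituent $\delta$-tubes discards precisely the twist structure that makes regulus strips tractable, and for generic $\delta$-tubes no $L^2$ argument can reach the full Kakeya bound. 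What is missing from your sketch, and what the paper supplies, is the identification of the problem with a \emph{cone} in $\ZR^3$ (Lemma~\ref{rs-wp}) together with the $L^6$ decoupling machinery that exploits it.
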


The $\text{SL}_2$ example discovered in \cite{Katz-Zahl-1} is a family of regulus strips $\cs$ with $\#\cs\approx\de^{-3/2}$ so that in the parameter space, each $\de^{1/2}$-ball in  $N_{\de^{1/2}}Z(ad-bc=1)$ contains $\approx 1$ many $\breve S$. 
Note that, up to a factor $\approx1$, the $\text{SL}_2$ example obeys the two-dimensional ball condition \eqref{two-dim-ball}.
Therefore, as a direct corollary of Theorem \ref{r-strip-thm}:

\begin{corollary}
The $\rm{SL}_2$ example has measure $\gtrapprox 1$.
\end{corollary}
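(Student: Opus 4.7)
The plan is to derive the corollary as an immediate consequence of Theorem \ref{r-strip-thm} applied to the $\text{SL}_2$ family $\cs$ with the trivial shading $Y(S):=S\cap[0,1]^3$, so that $\la=1$. Once this is set up, the only content is verifying the two-dimensional ball condition \eqref{two-dim-ball} and doing a short volume count.

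First I would verify \eqref{two-dim-ball} for the $\text{SL}_2$ example $\cs$. Recall that the variety $Z:=\{ad-bc=1\}\subset\ZR^4$ is $3$-dimensional and that $W=\bigcup_{S\in\cs}\breve S\subset N_{\de^{1/2}}Z\cap[0,1]^4$, with one strip $\breve S$ (a $\de$-tube of length $\de^{1/2}$, hence of $4$-volume $\de^{7/2}$) per $\de^{1/2}$-ball in $N_{\de^{1/2}}Z$. For a $4$-ball $B_r$ I would split into three regimes. If $r\geq\de^{1/2}$, then by the $3$-dimensionality of $Z$ one has $|N_{\de^{1/2}}Z\cap B_r|\lesssim r^3\de^{1/2}$, so $B_r$ contains $\lesssim r^3\de^{-3/2}$ strips and $|W\cap B_r|\lesssim r^3\de^2$, which gives $|W\cap B_r|\de^{-4}\lesssim r^3\de^{-2}\leq (r\de^{-1})^2$. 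If $\de\leq r\leq\de^{1/2}$, then the $\de^{1/2}$-separation of the $\breve S$ forces $B_r$ to meet $O(1)$ strips, each in a slab of $4$-volume $\lesssim\de^3 r$, hence $|W\cap B_r|\de^{-4}\lesssim r\de^{-1}\leq(r\de^{-1})^2$. If $r\leq\de$, the trivial bound $|W\cap B_r|\leq r^4$ gives $(r/\de)^4\leq(r/\de)^2$. In every case \eqref{two-dim-ball} holds up to an absolute constant, which can be absorbed by passing to a subfamily of bounded multiplicity.

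Next I would plug in. Since each $S\in\bar\cs$ has $|S|\approx\de^{3/2}$ and $\#\cs\approx\de^{-3/2}$, we get
\[
\sum_{S\in\cs}|S|\approx \#\cs\cdot\de^{3/2}\approx 1.
\]
Applying Theorem \ref{r-strip-thm} with $\la=1$ then yields, for every $\e>0$,
\[
\Big|\bigcup_{S\in\cs}S\cap[0,1]^3\Big|\geq c_\e\de^{\e}\sum_{S\in\cs}|S|\gtrsim c_\e\de^{\e},
\]
which is exactly the $\gtrapprox 1$ lower bound in the notation of the introduction.

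The main (and really the only) obstacle is the verification of \eqref{two-dim-ball} at scales $r\leq\de^{1/2}$, where it is essential that the $\text{SL}_2$ example is constructed with at most $O(1)$ strips per $\de^{1/2}$-ball in the parameter space; this non-concentration input is what turns the naive bound $|W\cap B_r|\leq\#\cs\cdot\de^{7/2}$ into the sharper slab estimate used above. Beyond this bookkeeping, no additional ideas are needed: the corollary is a direct application of Theorem \ref{r-strip-thm}.
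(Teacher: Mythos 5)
Your proposal is correct and follows the same route as the paper: the corollary is stated there as a direct consequence of Theorem \ref{r-strip-thm}, with the observation that the $\mathrm{SL}_2$ example satisfies the two-dimensional ball condition up to a constant factor asserted without detail. Your case analysis verifying \eqref{two-dim-ball} in the regimes $r\geq\de^{1/2}$ and $\de\leq r\leq\de^{1/2}$, together with the volume count $\sum_{S\in\cs}|S|\approx 1$, supplies exactly the bookkeeping the paper leaves implicit.
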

As a result, there is another epsilon improvement over the epsilon improvement of the three-dimensional Kakeya conjecture in \cite{Katz-Zahl-1}.

\medskip

Finally, let us discuss an application of Theorem \ref{r-strip-thm} in the first Heisenberg group.
One feature of the $\text{SL}_2$ lines $\cl_{\text{SL}_2}$ is that they coincide with the horizontal lines in the first Heisenberg group (see \cite{FO}).
In particular, for each $\ell\in \cl_{\text{SL}_2}$, the regulus strip $S(\ell)$ is essentially the $\de^{1/2}$-neighborhood of the horizontal line $\ell$ under the Kor\'anyi metric. See \cite{Liu22}.

Therefore, Theorem \ref{r-strip-thm} also gives a Nikodym maximal estimate for horizontal lines in the first Heisenberg group. 

\begin{definition}
Let $T$ be the $\delta$-neighborhood of an arbitrary horizontal unit line segment contained in the unit ball, under the Kor\'anyi metric in the first Heisenberg group $\ZH$.
Define a Nikodym maximal function
\begin{equation}
\label{Nikodym-maximal}
    Mf(z)=\sup_{z\in T}\frac{1}{|T|}\int_T|f(z')|dz'
\end{equation}
for any function $f:\ZH\to\ZC$. 
\end{definition}

By a standard duality, Theorem \ref{r-strip-thm} has the following corollary.
\begin{corollary}
Let $M$ be the Nikodym maximal operator defined in \eqref{Nikodym-maximal}. 
Then for any $\e>0$, $\|Mf\|_p\leq C_\e\de^{-\e}\|f\|_p$ when $p\geq6$.
\end{corollary}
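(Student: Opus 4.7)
The plan is a standard duality argument reducing the Nikodym estimate to Theorem \ref{r-strip-thm}. By Marcinkiewicz interpolation with the trivial $L^\infty$ bound, it is enough to establish the restricted weak-type $(6,6)$ inequality
\begin{equation*}
\la^6 \bigl|\{M\1_E > \la\}\bigr| \lesssim \de^{-\e}\,|E|
\end{equation*}
for every measurable $E \subset \ZH$ and every $\la \in (0,1]$. This immediately yields strong-type $(p,p)$ for $p > 6$, and the endpoint $p = 6$ is recovered because any logarithmic loss arising from the layer-cake integration is absorbed into $\de^{-\e}$ after a harmless relabelling of $\e$.

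The first step is discretization. Using the identification of $\de$-Kor\'anyi neighborhoods of horizontal lines in $\ZH$ with regulus strips $S(\ell)$, $\ell \in \cl_{\SL_2}$ (see \cite{Liu22}), and partitioning the parameter space into finitely many affine sectors to bring every strip (up to a fixed affine change of variables) into the model family $\bar\cs$, one replaces the sup defining $M$ by a sup over a maximal $\de$-separated family $\cs \subset \bar\cs$. Writing $\cs_\la := \{S \in \cs : |S \cap E| \geq c\la|S|\}$, we obtain the covering $\{M\1_E > \la\} \subset \bigcup_{S \in \cs_\la} S$, and the whole problem is reduced to bounding the measure of this union.

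The heart of the argument is to decompose $\cs_\la$ into $O(\log \de^{-1})$ sub-families each satisfying the two-dimensional ball condition \eqref{two-dim-ball}. I would do this iteratively: whenever some $r$-ball $B_r$ in parameter space witnesses a dyadic violation of \eqref{two-dim-ball}, extract the sub-collection $\cs(B_r) = \{S \in \cs_\la : \breve S \cap B_r \neq \varnothing\}$; the trivial estimate from the computation following Definition \ref{two-dim-condition} controls the contribution of each such extracted piece. For each residual sub-family $\cs' \subset \cs_\la$ obeying \eqref{two-dim-ball}, Theorem \ref{r-strip-thm} applied with shading $Y(S) := S \cap E$ gives
\begin{equation*}
|E| \,\geq\, \Bigl|\bigcup_{S \in \cs'}(S \cap E)\Bigr| \,\geq\, c_\e \de^\e \la^6 \sum_{S \in \cs'} |S| \,\geq\, c_\e \de^\e \la^6 \Bigl|\bigcup_{S \in \cs'} S\Bigr|,
\end{equation*}
and summing the $O(\log \de^{-1})$ contributions produces the claimed restricted weak-type inequality.

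The main technical obstacle I anticipate lies in the decomposition step: arranging the iterative extraction so that it terminates in $O(\log \de^{-1})$ rounds, and verifying that each ``concentrated'' piece $\cs(B_r)$ is genuinely absorbed into the $\de^{-\e}\la^{-6}|E|$ target rather than creating a new obstruction through large multiplicity. Once this bookkeeping is in place, the rest of the argument is standard machinery built on top of Theorem \ref{r-strip-thm}.
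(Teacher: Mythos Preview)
The paper offers no proof beyond the phrase ``by a standard duality,'' so your outline is already more detailed than what appears there. The restricted weak-type reduction and the discretization are fine. The genuine gap is exactly where you flag it: your treatment of the concentrated pieces does not work as written. The computation after Definition~\ref{two-dim-condition} only shows that a family violating \eqref{two-dim-ball} at scale $r$ has $|\bigcup_{\cs(B_r)}S|\lesssim r^2$ (or $r\de^{1/2}$); it says nothing about $|E|$. In particular there is no reason why $r^2\lesssim\la^{-6}|E|$, so the extracted pieces are not ``absorbed'' into the target. Moreover, a full $\de$-separated family in $\bar\cs$ has $\sim\de^{-5/2}$ strips while the two-dimensional ball condition caps any subfamily at $\sim\de^{-3/2}$, so you cannot hope to partition $\cs_\la$ into $O(\log\de^{-1})$ good pieces either.

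The clean fix is to abandon $\cs_\la$ and instead use the Nikodym structure directly. Take a $\de$-net $X$ in $A=\{M\1_E>\la\}$, so $\#X\sim|A|\de^{-3}$, and for each $x\in X$ choose one strip $S_x\ni x$ with $|S_x\cap E|>\la|S_x|$. The key observation is that if $\breve S_x\cap B_r\neq\varnothing$ then $x\in S_x$ lies in the corresponding $r$-tube (for $r\geq\de^{1/2}$) or in $N_{2r}(S_0)$ (for $r\leq\de^{1/2}$), whence
\[
\#\{x\in X:\breve S_x\cap B_r\neq\varnothing\}\lesssim
\begin{cases} r^2\de^{-3},&r\geq\de^{1/2},\\ r\de^{-5/2},&r\leq\de^{1/2}.\end{cases}
\]
Now a random subsample of $X$ with probability $\sim\de^{3/2}$ (as in the proof of Lemma~\ref{two-dim-lem}) produces $X'$ with $\#X'\gtrapprox|A|\de^{-3/2}$ and such that $\cs'=\{S_x:x\in X'\}$ satisfies \eqref{two-dim-ball}. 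Theorem~\ref{r-strip-thm} applied to $\cs'$ then gives $|E|\geq|\bigcup_{\cs'}(S\cap E)|\gtrsim\de^{\e}\la^6\sum_{\cs'}|S|\gtrapprox\de^{\e}\la^6|A|$, which is the desired restricted weak-type bound. The point is that the non-concentration comes for free from the spread of the \emph{basepoints} $x$, not from any property of $\cs_\la$.
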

\begin{proof}
It suffices to consider $f=\Id_E$, where $E$ is a union of disjoint Heisenberg $\delta$-balls contained in the unit ball.
For each point $z\in\ZH$, let $z=(x,y,t)$ be its standard coordinate.
Using the foliation of $\ZH$ by vertical planes through the vertical axis and by the triangle, it suffices to show that
\begin{equation}
\label{restricted-type}
    \int |M^\ast\Id_E(0,y,t)|^6dydt\lessapprox |E|.
\end{equation}
Here $M^\ast$ is the maximal operator restricted to horizontal lines that are transverse to the plane $\{x=0\}$.
Note that \eqref{restricted-type} boils down to the weak-type estimate
\begin{equation}
\label{weak-type}
    \lambda^6|F_\lambda|\lessapprox|E|,
\end{equation}
where $F_\lambda=\{(y,t): |M^\ast\Id_E(0,y,t)|\geq\lambda\}$.

Note that, after discretization, $F_\lambda$ can be viewed as a union of horizontal $\delta\times\delta^2$-rectangles.
For each such rectangle, choose a regulus strip $S$ of dimensions $1\times\delta\times\delta^2$ (the $\delta$-neighborhood of a horizontal line) such that $|S\cap E|\geq \lambda |S|$.
Denote by $Y(S)=S\cap E$.
It is straightforward to check that the family of regulus strips generated by the set of $\delta\times\delta^2$-rectangles from $F_\lambda$ obeys the two-dimensional ball condition.
By Theorem \ref{r-strip-thm}, we have
\begin{equation}
    |E|\geq\Big|\bigcup_{S}Y(S)\Big|\gtrapprox\la^6\sum_{S}|S|\sim\lambda^6|F_\lambda|.
\end{equation}
This shows \eqref{weak-type} and hence the Corollary.
\qedhere

\end{proof}

\medskip

{\bf Notations:}  We write $A\lesssim B$ if $A\leq CB$ for some absolute constant $C$, and write $A\lessapprox B$ if $A\leq C_\e\de^{-\e}B$ for any $\e>0$. 
We use $A=\Om(B)$ if $B\lesssim A$ and $A=\Om^\ast(B)$ if $B\lessapprox A$.
If $\ce$ is a family of sets in $\ZR^n$, we use $\cup_\ce$ to denote $\cup_{E\in\ce}E$.
For two sets $A,B\in\ZR^n$, let $A+B=\{a+b:a\in A, b\in B\}$. 
For two finite sets $E,F$, we say $E$ is an $\Omega(c)$-refinement of $F$, if $E\subset F$ and $\#E\gtrsim c\#F$; we say $E$ is a $\Omega^\ast(c)$-refinement of $F$ if $E\subset F$ and  $\#E\gtrapprox c\#F$.

\bigskip

\noindent
{\bf Acknowledgment.} The author is indebted to Joshua Zahl for finding the suitable non-concentration condition in Definition \ref{two-dim-condition} and for several illuminating conversations.

\bigskip

\section{Proof of Theorem \ref{r-strip-thm}}

Let $\cl$ be the collection of lines $\l_{(a,b,c,d)}$ satisfying $a-d=0$. 
It was observed in \cite{FO} that a generic line in $\cl_{\text{SL}_2}$ can be parameterized by lines in $\cl$ as well.
This observation will simplify our calculation later.

\begin{lemma}[$\rm{SL}_2$ is linear]
\label{sl2-is-linear}
Lines in $\cl_{\text{SL}_2}$ satisfying $ad\not=0$ can be identified as lines in $\cl$.
\end{lemma}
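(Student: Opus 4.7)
The plan is to construct an explicit change of variables on the parameter space $\ZR^4$ sending the quadric $\{ad-bc=1,\ ad\neq 0\}$ bijectively onto an open subset of the hyperplane $\{a-d=0\}$. Geometrically, this should amount to re-describing each $\text{SL}_2$-line in $\ZR^3$ by interchanging the roles of its $y$- and $z$-coordinates.

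First I would take a line $\ell_{(a,b,c,d)} \in \cl_{\text{SL}_2}$ with $ad\neq 0$ in its standard form $\{(a+sc,\,b+sd,\,s)\}$, use $d\neq 0$ to re-solve for the parameter in terms of $y$ rather than $z$, and apply the $\text{SL}_2$ identity $ad-bc=1$ to simplify the resulting $x$-coordinate. Then, after the affine swap $(x,y,z)\mapsto(x,z,y)$ of $\ZR^3$, the line should appear in standard form with 4-tuple $(1/d,\,-b/d,\,c/d,\,1/d)$, whose first and fourth entries coincide; hence it lies in $\cl$. Setting $\Phi(a,b,c,d):=(1/d,-b/d,c/d,1/d)$, a short computation would then yield the explicit inverse and verify that $\Phi$ is a bijection between the appropriate open subsets of $\cl_{\text{SL}_2}|_{ad\neq 0}$ and $\cl$ (with the image sitting inside $\{a'\neq 0,\ a'^2\neq b'c'\}$).

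The proof itself is a brief algebraic verification; no genuine obstacle arises within this lemma. The real content — and the reason for the title ``$\text{SL}_2$ is linear'' — is that under $\Phi$, the quadratic constraint $ad-bc=1$ on $\cl_{\text{SL}_2}$ becomes the linear constraint $a=d$ on $\cl$. This linearization is the observation that will simplify downstream computations. The main subsequent work, to be handled outside this lemma, is to verify that the geometric structures relevant to Theorem~\ref{r-strip-thm} (reguli, regulus strips, and the two-dimensional ball condition) transfer cleanly across $\Phi$, so that Fourier-analytic estimates can be performed in the linear setting provided by $\cl$.
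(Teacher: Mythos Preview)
Your proposal is correct and takes essentially the same approach as the paper: both swap the $y$- and $z$-coordinates of $\ZR^3$ and reparameterize the line using the second coordinate as the new height variable, arriving at $a'=d'=1/d$ (which agrees with the paper's $a'=d'=a/(1+bc)$ since $ad-bc=1$). Your computation is slightly more streamlined---you solve directly for the parameter $s$ in terms of $y$, whereas the paper computes two specific points $\ell(t_0),\ell(t_1)$ and reads off the new $4$-tuple from them---but the underlying idea is identical.
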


\begin{proof}

For any non-horizontal line $\ell_{(a,b,c,d)}$, let $\ell_{(a,b,c,d)}(t)=(a,b,0)+t(c,d,1)$ be its intersection with the horizontal plane at height $t$.
Thus, each line $\l\in\cl_{\text{SL}_2}$ with $ad\not=0$ has the parameterization
\begin{equation}
    \l(t)=(a+ct,\, b+(1+bc)t/a,\, t).
\end{equation}

Choose $t_0$ and $t_1$ so that the second entries of $\l(t_0)$ and $\l(t_1)$ are $0$ and $1$ respectively. 
Hence
\begin{equation}
    t_0=-\frac{ab}{1+bc},\hspace{.5cm}t_1=\frac{a-ab}{1+bc}.
\end{equation}
Moreover, we have
\begin{equation}
    \l(t_0)=(\frac{a}{1+bc}, \,0,\, \frac{-ab}{1+bc})
    \,\,\,\text{ and }\,\,\,
    \l(t_1)=(\frac{a+ac}{1+bc}, \,1,\, \frac{a-ab}{1+bc}).    
\end{equation}
This gives a directional vector 
\begin{equation}
    v=\l(t_1)-\l(t_0)=(\frac{ac}{1+bc}, \,1,\, \frac{a}{1+bc}). 
\end{equation}

Thus, if swapping the second and the third coordinates of $\ell(t_0)$, we have
\begin{equation}
\label{reparameterization}
    \l(t_0)=(\frac{a}{1+bc}, \, \frac{-ab}{1+bc},\, 0)
    \,\,\,\text{ and }\,\,\,v=(\frac{ac}{1+bc}, \, \frac{a}{1+bc},\, 1).
\end{equation}
This suggests that the line $\ell$ can be re-parameterized as $\ell=\ell_{(a',b',c',d')}$ obeying $a'=d'$ ($a'=d'=\frac{a}{1+bc}$ in \eqref{reparameterization}).  \qedhere

\end{proof}

Now we can identify a line $\ell_{(a,b,c,a)}\in\cl$ as a point $(a,b,c)\in\ZR^3$, and define 
\begin{equation}
    \ell_{(a,b,c)}:=\ell_{(a,b,c,a)}.
\end{equation}
This gives a point-line duality between lines in $\cl$ and points in $\ZR^3$.
As a direct consequence of Lemma \ref{sl2-is-linear}, any regulus strip in $\bar\cs$ can be re-parameterized as follows.

\begin{corollary}
\label{regulus-strip-repara}
Let $\bar\cs$ be the family of regulus strips defined in \eqref{family-regulus-strip}.
After choosing an appropriate parameterization, each $S(\ell)\in \bar\cs$ can be expressed as
\begin{equation}
    S(\ell)=\{(a+ct, b+at, t)+u(1,-t,0), \hspace{.3cm}t\in[0,1], \,\,\,|u|\lesssim\de^{1/2}\}+B_\de^3,
\end{equation}
where $\ell=\ell_{(a,b,c)}\in\cl$.
In particular, if we let 
\begin{equation}
\label{regulus}
    R(\ell_{(a,b,c)}):=\{(a+ct, b+at, t)+u(1,-t,0), \hspace{.3cm}t\in[0,1], \,\,\,u\in\ZR\}
\end{equation}
be a regulus, then $S(\ell)=N_\de(R(\ell))\cap N_{\de^{1/2}}(\ell)$.
\end{corollary}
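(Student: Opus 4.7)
The plan is to combine Lemma~\ref{sl2-is-linear} with the dual tube description of $\breve S(\ell)$ observed immediately after the definition of regulus strips.

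First, I apply Lemma~\ref{sl2-is-linear} to any $\ell\in\cl_{\SL_2}$ with $ad\neq 0$: after swapping the second and third spatial coordinates, $\ell$ becomes a line of the form $\ell_{(a,b,c,a)}\in\cl$, which I abbreviate as $\ell_{(a,b,c)}$. In these new coordinates, $\ell$ admits the parameterization $\ell(t)=(a+ct,\,b+at,\,t)$ for $t\in\R$, matching the base point of the formula claimed in the corollary.

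Second, I use the fact that $\breve S(\ell)\subset\R^4$ is a $\de$-tube of length $\de^{1/2}$ along $\breve\ell=(a,b,c,a)$. Every line $\ell'$ making up $S(\ell)$ therefore has $\breve\ell'=(1+u)\breve\ell + O(\de)$ for some $|u|\leq\de^{1/2}$, and its $\R^3$-parameterization is
\[
\ell'(t)=\big((1+u)(a+ct),\,(1+u)(b+at),\,t\big)+O(\de).
\]
Unioning over $t\in[0,1]$ and $|u|\leq\de^{1/2}$ and adding $B_\de^3$ produces a first description of $S(\ell)$, written with the ``natural'' width direction $(a+ct,\,b+at,\,0)$.

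Third, I would convert this description into the form stated in the corollary by a linear change of variables in the $u$ parameter, replacing the natural width direction with $(1,-t,0)$. The regulus $R(\ell_{(a,b,c)})$ in \eqref{regulus} is then obtained by letting $u$ range over all of $\R$, and the identity $S(\ell)=N_\de(R(\ell))\cap N_{\de^{1/2}}(\ell)$ follows from the strip description.

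The main technical obstacle is this last reparameterization. The directions $(a+ct,\,b+at,\,0)$ and $(1,-t,0)$ are generally not proportional, so equating the two plank descriptions requires an additional argument. One expects to use the $\SL_2$ relation $a^2-bc=1$ (equivalently $bc=a^2-1$) together with the confinement $|u|\lesssim\de^{1/2}$ and the $B_\de^3$ fattening to show that the discrepancy between the two planks is at most $O(\de)$ and is therefore absorbed. Since the corollary is only needed to set up a convenient parameterization for the subsequent Fourier-analytic arguments, it suffices to have agreement up to the $B_\de^3$ neighborhood.
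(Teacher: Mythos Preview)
Your Step 2 contains a genuine error that propagates into the ``technical obstacle'' you identify in Step 3. The description $\breve S(\ell)=\{(1+u)\breve\ell:|u|\le\de^{1/2}\}+B_\de^4$ is valid only in the \emph{original} $\SL_2$ parameterization. The coordinate change of Lemma~\ref{sl2-is-linear} (swapping the second and third spatial coordinates) induces a \emph{nonlinear} map on line parameters---explicitly $(a,b,c,d)\mapsto(1/d,\,-b/d,\,c/d,\,1/d)$ on the $\SL_2$ variety---so you cannot simply replace $\breve\ell$ by the new $(a,b,c,a)$ and keep the same radial tube direction. If you actually push the segment $\{(1+u)\breve\ell_{\rm old}\}$ through this map you get, to first order in $u$, the segment $\{\breve\ell_{\rm new}+v(1,0,0,-1):|v|\lesssim\de^{1/2}\}$; this is exactly \eqref{regulus-strip-para-2} later in the paper. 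The corresponding lines are $\ell_{(a+v,\,b,\,c,\,a-v)}$, whose parameterization is
\[
(a+v+ct,\ b+(a-v)t,\ t)=(a+ct,\ b+at,\ t)+v(1,-t,0),
\]
and the corollary follows with no further work.

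By contrast, the object you wrote down---the plank with width direction $(a+ct,\,b+at,\,0)$ at height $t$---is \emph{not} the regulus strip in the new coordinates. At a generic height these two $\de\times\de^{1/2}$ rectangles share a center but make an angle $\sim1$, so they overlap only in a set of area $\sim\de^2$, a $\de^{1/2}$-fraction of each. No $O(\de)$ absorption can reconcile them, and the relation ``$a^2-bc=1$'' you hope to invoke does not hold in the new coordinates (there $(a,b,c)$ is unconstrained). The obstacle is a symptom of the wrong tube direction, not something to be fixed afterward.

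An equivalent direct route, avoiding the dual picture entirely, is to recompute the plane map for the family $\cl$: the $\cl$-lines through a point $p$ have directions $(c,\,p_1-cp_3,\,1)$, which span a plane with normal $(p_3,1,-p_1)$, and the horizontal line in that plane has direction $(1,-p_3,0)$. This gives \eqref{regulus} immediately.
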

\begin{proof}
Suppose $\ell$ is parameterized as $\ell_{(A,B,C,D)}$ with $AD-BC=1$.
By \eqref{regulus-first-para}, we can write the regulus $R(\ell)$ explicitly as
\begin{align}
\label{regulus-para-explicit}
    R(\ell)&=\left\{(A+Cs,B+Ds,s)+r(A+Cs,B+Ds,0):    s,r\in\mathbb R\right\}\\
    &=\left\{
    (\lambda(A+Cs),\lambda(B+Ds),s):
    s,\lambda\in\mathbb R
    \right\}.
\end{align}

As in Lemma \ref{sl2-is-linear}, consider the coordinate swap $\Phi(X,Y,Z)=(X,Z,Y)$.
The new height variable is $t=Y$. 
Let
\begin{equation}
    a=\frac1D,\qquad b=-\frac BD,\qquad c=\frac CD.
\end{equation}
Note that a point of the regulus $R(\ell)$ is $(\lambda(A+Cs),\lambda(B+Ds),s)$.
After the coordinate swap, its new height is $t=\lambda(B+Ds)$.
Solving for $s$, we get
\begin{equation}
    s=\frac{t-\lambda B}{\lambda D}
      =
      -\frac BD+\frac{t}{\lambda D}
      =
      b+\frac a\lambda t.
\end{equation}
As a result, the first coordinate of $R(\ell)$ becomes
\begin{equation}
        \lambda(A+Cs)
    =
    \lambda A+\lambda C\frac{t-\lambda B}{\lambda D}
    =
    \lambda\left(A-\frac{CB}{D}\right)+\frac CD t=a\lambda+ct,
\end{equation}
since $AD-BC=1$.
Therefore, after the coordinate swap, the regulus $\Phi(R(\ell))$ can be parameterized as
\begin{equation}
    \left\{
    \left(a\lambda+ct,\ b+\frac a\lambda t,\ t\right):
    t,\lambda\in\mathbb R
    \right\}.
\end{equation}

\smallskip 

Put $ u=a(\lambda-1)$, so $\lambda=1+\frac ua$.
Consequently,
\begin{equation}
    \left(a\lambda+ct,\ b+\frac a\lambda t,\ t\right)
    =
    (a+ct,b+at,t)+u(1,-t,0)
    +
    \left(0,\frac{u^2}{a+u}t,0\right)
\end{equation}
Note that the assumptions $|u|\lesssim \delta^{1/2}$ and $a\sim 1$ imply that $\frac{u^2}{a+u}t=O(\delta)$
for $t\in[0,1]$. 
This completes the proof of the Corollary.
\qedhere

\end{proof}

\medskip

In order to prove Theorem \ref{r-strip-thm} by induction, we need to consider a wider class of regulus strips: $(\de,\rho)$-regulus strips.

\begin{definition}

For $\rho\in[\de,\de^{1/2}]$, a {\bf $(\de,\rho)$-regulus strip} $S_\rho(\ell)$ is defined as $S_\rho(\ell):=S(\ell)\cap N_{\rho}(\ell)$ for some regulus strip $S(\ell)\in\bar\cs$.
In other words, if $\ell=\ell_{(a,b,c)}\in\cl$, then  $S_\rho(\ell)$ is a regulus strip of dimensions $\de\times\rho\times1$ in $\ZR^3$ that can be expressed as
\begin{equation}
\label{para-2}
    \{(a+ct, b+at, t)+u(1,-t,0),\,\,\,t\in[0,1], \,\,\,|u|\leq\rho\}+B_\de^3.
\end{equation}
That is, $S_\rho(\ell)=N_\de(R(\ell))\cap N_{\rho}(\ell)$.
Note that when $\rho=\de$, a $(\de,\rho)$-regulus strip is a $\de$-tube;
when $\rho=\de^{1/2}$, a $(\de,\rho)$-regulus strip is a regulus strip.

\end{definition}

\medskip

Given a family of $(\de, \rho)$-regulus strips $\cs$, by \eqref{para-2}, each $S_\rho=S_\rho(\ell_{(a,b,c)})\in\cs$ can be parameterized as a point $(a,b,c)\in\ZR^3$. 
This correspondence gives a set of points 
\begin{equation}
\label{para-set-R3}
    E(\cs)=\{x\in\ZR^3:\exists \ell_x\text{ such that }S_\rho(\ell_x)\in\cs\}
\end{equation}
so that there is a bijection $E(\cs)\longleftrightarrow\cs$, given by $x\longleftrightarrow S_\rho(\ell_x)$.

\begin{definition}
We say a set of $(\de,\rho)$-regulus strips $\cs$ is {\bf $\de$-separated}, if the parameter set $E(\cs)$ is a set of $\de$-separated points in $\ZR^3$.
\end{definition}

For each $t\in[0,1]$, consider the vectors $e_j=e_j(t)$ defined as
\begin{equation}
\label{e1e2n}
    e_1=(1, 0, t),\,\,\, e_2=(t, 1, 0),\,\,\, e_3=e_1\wedge e_2=(-t, t^2, 1).
\end{equation}
Define $\pi_t$ as
\begin{equation}
\label{pi-t}
    \text{$\pi_t$ is the orthogonal projection onto $e_3^\perp(t)$}.
\end{equation}
For a family of $(\de, \rho)$-regulus strips $\cs$, let 
\begin{equation}
\label{slice}
    X(\cs)=\cup_{\cs}, \text{ and } X_t(\cs)=X(\cs)\cap \{z_3=t\}
\end{equation}
be the slice of $X(\cs)$ at height $t$.

\smallskip

The next lemma is about a duality for regulus strips.
Before stating the lemma, we first recalled the point-line duality found in \cite{FO}.

Let $\ell_{x}\in\cl$ be a line, where $x=(a,b,c)\in\ZR^3$ is a point in the parameter space.
Then for any physical point $p=(p_1,p_2,t)\in\ZR^3$, there exists a light ray $\bar\ell(p)$ with direction $e_3(t)$ in the parameter space such that the following is true: $p\in\ell_x$ if and only if $x\in\bar\ell(p)$.
In other words, $p\in\ell_x$ if and only if $\bar\ell(p)$ is the line $x+\text{span}(e_3(t))$.
Thus, each $\ell_x\in\cl$ corresponds to a one-dimensional family of light rays $\{x+\text{span}(e_3(t)):t\in[0,1]\}$ intersecting $x$.
Consequently, given a family of lines $L=\{\ell_x:x\in E\}\subset\cl$ for some parameter set $E\subset\ZR^3$, the horizontal slice $L\cap \{z_3=t\}$ is the projection $\pi_t(E)$, up to a linear transformation depending on $t$.
In \cite{FO}, this duality was used to convert the $\SL_2$-Kakeya problem to a restricted projection problem.

Note that each line $\ell_x$ corresponds to only one single point $x$ in the parameter space.
The situation for regulus strips here is a bit different.
For each regulus strip $S(\ell_x)$, the horizontal slice $S(\ell_x)\cap \{z_3=t\}$ corresponds to a $\de\times\de^{1/2}$-tube in the parameter space, that depends not only on $x$, but also on the parameter $t$.
In the next lemma, we make the following crucial observation: the $\de\times\de^{1/2}$-tube corresponding to the horizontal slice $S(\ell_x)\cap \{z_3=t\}$ is indeed contained in a ``conical wave packet" containing $x$ with direction $e_3(t)$.
As a consequence, $|X_t(\cs)|$ is essentially the same as the area of the union of certain conical wave packets determined by the set $E(\cs)$ and the projection $\pi_t$.
This observation allows us to use the tools developed in \cite{GGGHMW} to study regulus strips.

\begin{lemma}[Regulus strips vs. wave packets]
\label{rs-wp}
Let $\cs$ be a set of $\de$-separated $(\de,\rho)$-regulus strips and let $E(\cs)$ be its corresponding parameter set. 
For each $x\in E(\cs)$ and each $t\in[0,1]$, there exists a $1\times\rho\times\de$-plank $T_x=T_x(t):=x+\{y_1e_3+y_2(e_3/|e_3|)'+y_3e_3\wedge (e_3/|e_3|)': |y_1|\leq1, |y_2|\leq \rho, |y_3|\leq\de\}$, so that 
\begin{equation}
    |X_t(\cs)|\sim \Big|\pi_t\Big(\bigcup_{x\in E(\cs)} T_x(t)\Big)\Big|\sim \Big|\bigcup_{x\in E(\cs)} T_x(t)\Big|.
\end{equation}
\end{lemma}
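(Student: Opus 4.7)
The plan is to introduce the affine ``slice-center'' map $\phi_t\colon\R^3\to\R^2$ given by $\phi_t(a,b,c)=(a+ct,\,b+at)$, which by Corollary \ref{regulus-strip-repara} sends each $x=(a,b,c)\in E(\cs)$ to the center of the physical slice $S_\rho(\ell_x)\cap\{z_3=t\}$. A direct computation gives $\phi_t(e_3(t))=0$, so $\phi_t$ factors as $\phi_t=\tilde\phi_t\circ\pi_t$ for a linear isomorphism $\tilde\phi_t\colon e_3^\perp\to\R^2$; computing $|\det\tilde\phi_t|$ shows it is of order $1$ uniformly in $t\in[0,1]$, which immediately yields the middle equivalence $|\phi_t(\cup_{x}T_x(t))|\sim|\pi_t(\cup_{x}T_x(t))|$.

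Next I match $\phi_t(T_x(t))$ with the physical slice. Since $(e_3/|e_3|)'$ and $e_3\wedge(e_3/|e_3|)'$ are both orthogonal to $e_3$, they lie in $e_3^\perp$. A short calculation gives
\[
\phi_t\bigl((e_3/|e_3|)'\bigr)=|e_3|^{-1}(-1,t),
\]
which has magnitude $\sim 1$ and is parallel to the medium direction $(1,-t)$ of the physical slice; a similar calculation shows $\phi_t(e_3\wedge(e_3/|e_3|)')$ has magnitude $\sim 1$ and makes an $\Omega(1)$ angle with $(1,-t)$. Because $\phi_t$ vanishes on $e_3$, the image $\phi_t(T_x(t))$ is a parallelogram of dimensions $\sim\rho\times\delta$ centered at $\phi_t(x)$ with long axis along $(1,-t)$, which by Corollary \ref{regulus-strip-repara} is precisely $S_\rho(\ell_x)\cap\{z_3=t\}$ up to a constant dilation. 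Taking unions yields $|X_t(\cs)|\sim|\phi_t(\cup_{x}T_x(t))|$.

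For the remaining equivalence $|\cup_{x}T_x(t)|\sim|\pi_t(\cup_{x}T_x(t))|$, I use that $T_x(t)$ is a parallelepiped of length $\sim 1$ in the $e_3$ direction and the parameter set lies in $[0,1]^3$, so $\cup_{x}T_x(t)$ sits in a bounded slab. Fubini along the $e_3$-fibers of $\pi_t$ gives the fiber length $L(y)=|\pi_t^{-1}(y)\cap\cup_{x}T_x(t)|$. The upper bound $L(y)=O(1)$ is immediate from the slab, while the lower bound $L(y)\gtrsim 1$ for $y\in\pi_t(\cup_{x}T_x(t))$ follows from the observation that if $y\in\pi_t(T_x(t))$ then an $e_3$-segment of length $\gtrsim 1$ inside $T_x(t)$ maps to $y$, since $T_x(t)$ is a parallelepiped whose full $e_3$-axis sits over every point of its projection.

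The main obstacle is the step identifying $(e_3/|e_3|)'$ and $e_3\wedge(e_3/|e_3|)'$---the natural tangential and conormal directions to the spherical curve $t\mapsto e_3(t)/|e_3(t)|$---as the directions that $\phi_t$ carries onto the medium and short axes of the physical slice. This identification is what recasts regulus strips as cone-type wave packets over that curve and enables the Fourier-analytic tools of \cite{GGGHMW} to be invoked in the remainder of the paper.
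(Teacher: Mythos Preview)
Your argument is correct and follows essentially the same route as the paper. Both proofs use the linear map $\phi_t(a,b,c)=(a+ct,b+at)$ (the paper writes it as $y\mapsto(y\cdot e_1,\,y\cdot e_2)$), observe that its kernel is $\operatorname{span}(e_3)$ so that it factors through $\pi_t$ with bounded distortion, and then identify the medium axis of $T_x$ with the direction $(1,-t)$ of the physical slice. The only organizational difference is that the paper defines $T_x$ as $\pi_t^{-1}(N_\de(P_x'))$ and then checks its medium direction equals $\xi'$ by explicitly computing $v(t)=(0,-t,t^{-1})+\alpha\xi(t)$ and comparing to the formula for $\xi'(t)$, whereas you go in the opposite direction: starting from the plank with prescribed axes $e_3,\xi',e_3\wedge\xi'$, you compute $\phi_t(\xi')=|e_3|^{-1}\phi_t(e_3')=|e_3|^{-1}(-1,t)$ directly, using $\phi_t(e_3)=0$ to kill the second term of the quotient-rule expansion $(e_3/|e_3|)'=e_3'/|e_3|-e_3(e_3\cdot e_3')/|e_3|^3$. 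This shortcut is exactly the conceptual explanation the paper postpones to Section~\ref{explanation} (``$\gamma,\gamma',(\gamma/|\gamma|)'$ are coplanar''), so your version simply absorbs that explanation into the proof itself.
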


\begin{remark}

\rm

Note that $X_t(\cs)$ is a union of $\de\times\rho$-tubes. 
Lemma \ref{rs-wp} implies that the number of distinct $\de\times\rho$-tubes in $X_t(\cs)$ is about the same as the number of distinct $1\times\rho\times\de$-planks in $\bigcup_{x\in E(\cs)} T_x(t)$.
\end{remark}

\begin{proof}[Proof of Lemma \ref{rs-wp}]

Denote by $\xi(t)$ the normalized vector
\begin{equation}
    \xi(t)=\frac{e_3}{|e_3|}=\frac{1}{(1+t^2+t^4)^{1/2}}(-t,t^2,1)
\end{equation}
so that
\begin{equation}
\label{ga-prime}
    \xi'(t)=\frac{1}{(1+t^2+t^4)^{3/2}}(t^4-1,2t+t^3,-t-2t^3).
\end{equation}

For $x=(a,b,c)\in E(\cs)$, denote by $P_x=\{(a+ct, b+at)+u(1,-t), |u|\leq\rho\}$, which is a horizontal line segment of length $\rho$. 
Then $N_\de(P_x)$ is the intersection of the regulus strip $S_\rho(\ell_x)\in\cs$ with the horizontal plane $\{z_3=t\}$. 
Notice that $P_x$ can be expressed as
\begin{equation}
    \{\big((x+u(0,-t,t^{-1}))\cdot e_1,\,\, (x+u(0,-t,t^{-1}))\cdot e_2\big), \,\,\,|u|\leq\rho\}.
\end{equation}
Since $e_1\wedge e_2\sim 1$, we have $|N_\de(P_x')|\sim |N_\de(P_x)|\sim\rho\de$, where
\begin{equation}
\label{l-x-prime}
    P_x'=\pi_t(\{x+u(0,-t,t^{-1}),\,\,\, |u|\leq\rho\}).
\end{equation}
Thus, $P_x\cap P_y\not=\varnothing$ if and only if $P_x'\cap P_y'\not=\varnothing$. 
In other words, $S(\ell_x)\cap S(\ell_y)\cap \{z_3=t\}\not=\varnothing$ if and only if $P_x'\cap P_y'\not=\varnothing$.

\smallskip

The preimage of $N_\de(P_x')$ regarding $\pi_t$ is an infinite strip whose cross-section is a $\de\times\rho$-rectangle.
Denote its intersection with the unit ball by $T_x:=N_\de(\{P_x'+k\xi, |k|\leq 1\})$, which is a $1\times\rho\times\de$-plank whose longest direction is $e_3$. 
To prove the lemma, it suffices to show that the second-longest direction of $T_x$ is parallel to $\xi'=(e_3/|e_3|)'$. 

\smallskip

Indeed, let the direction of the second-longest side of $T_x$ be a vector $v(t)$ so that it is orthogonal to $\xi(t)$.
By \eqref{l-x-prime}, $P_x'$ is a line segment with direction $(0,-t,t^{-1})$.
Thus, there exists a number $\al$ such that $v(t)$ can be parameterized as $v(t)=(0,-t,t^{-1})+\al\xi(t)$.
To find $\al$, we solve the equation $\langle (0,-t,t^{-1})+\al\xi, \xi\rangle=0$ and obtain $\al=(t^3-t^{-1})/(1+t^2+t^4)$, which implies 
\begin{equation}
\label{above-parallel}
    v(t)=\frac{1}{1+t^2+t^4}(1-t^4,-2t-t^3,2t^3+t).
\end{equation}
By \eqref{ga-prime}, this means that $v$ is indeed parallel to $\xi'$ (we will explain this coincidence later in Section \ref{explanation}). Hence the second-longest direction of $T_x$ is $\xi'=(e_3/|e_3|)'$.
\end{proof}

\smallskip

Suppose that $S(\ell)$ is a $(\de,\rho)$-regulus strip. 
Align with the notation in Definition \ref{line-para}, by \eqref{para-2}, $\breve S(\ell)$ is a $\de$-tube of length $\rho$ in $\ZR^4$ that can be parameterized as
\begin{equation}
\label{regulus-strip-para-2}
    \breve S(\ell)=\{\breve\ell+u(1,0,0,-1): |u|\leq\rho\}+B^4_\de.
\end{equation}
As a result, we can extend  Definition \ref{two-dim-condition} to a family of $(\de,\rho)$-regulus strips naturally as the following.

\begin{definition}
\label{two-dim-condition-2}
Let $\cs$ be a set of $(\de,\rho)$-regulus strips and let $W=\bigcup_{S\in\cs}\breve S\subset\ZR^4$ be a union of $\de$-tubes of length $\rho$ (recall \eqref{regulus-strip-para-2}). 
We say $\cs$ obeys the {\bf two-dimensional ball condition} if for any $r\in[\de,1]$ and any four-dimensional $r$-ball $B_r$, 
\begin{equation}
\label{two-dim-ball-2}
    |W\cap B_r|\de^{-4}\leq 100 (r\de^{-1})^2.
\end{equation}
\end{definition}

\begin{lemma}
\label{two-dim-lem}
Let $\cs$ be a set of distinct $(\de,\rho)$-regulus strips with the two-dimensional ball condition \eqref{two-dim-ball-2},
and let $\cs'$ be a set of $(\de',\rho')$-regulus strips with $\de\leq\de'$ and $\rho\leq\rho'$ such that each $S'\in\cs'$ contains some $S\in\cs$. 
Then there exists $\cs_1'\subset\cs'$ with $(\#\cs_1')\geq |\log\de|^{-1}(\de/\de')(\rho/\rho')(\#\cs')$ such that $\cs_1'$ is a set of $(\de',\rho')$-regulus strips with the two-dimensional ball condition \eqref{two-dim-ball-2}.
\end{lemma}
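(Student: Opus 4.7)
The plan is to reformulate the lemma as a combinatorial statement about $\de'$-separated point sets in the $\cl$-parameter space $\ZR^3$, and then refine $E'$ randomly using the two-dimensional ball condition on $\cs$ as input. First, I would reparameterize via Lemma \ref{sl2-is-linear}: each $\breve S$ is a $\de$-tube of length $\rho$ in $\ZR^4$ along the fixed direction $(1,0,0,-1)$, anchored at $(a,b,c,a)$ for $\ell=\ell_{(a,b,c)}\in\cl$. Since $(a,b,c,a)\cdot(1,0,0,-1)=0$, every anchor lies on the three-dimensional orthogonal complement of this direction, which I identify with $\ZR^3$ via $(a,b,c)\leftrightarrow(a,b,c,a)$. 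Under this identification, $\breve S\subset O(1)\breve S'$ is equivalent to the $\cl$-parameters of $S,S'$ being within $O(\de')$ in $\ZR^3$, and condition \eqref{two-dim-ball-2} on $\cs$ translates to the three-dimensional ball count bound
\begin{equation*}
\#(E\cap B_r)\lesssim\frac{r^2}{\de\min(\rho,r)},\qquad r\in[\de,1],
\end{equation*}
where $E\subset\ZR^3$ is the parameter set of $\cs$. The desired conclusion reduces to producing $E_1'\subset E'$ with $\#E_1'\gtrsim|\log\de|^{-1}(\de/\de')(\rho/\rho')\#E'$ satisfying the analogous bound at scale $(\de',\rho')$.

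After first refining $\cs'$ to be $\de'$-separated (which loses only an absolute constant factor), the hypothesis gives an $O(1)$-to-one map $E'\to E$ sending each $x'$ to some $x\in E$ within $O(\de')$. Hence the $\cs$-bound transfers to $\#(E'\cap B_r)\lesssim r^2/(\de\min(\rho,r))$, and a direct case analysis on $r$ versus $\rho,\rho'$ shows this exceeds the target $r^2/(\de'\min(\rho',r))$ by a factor of at most $(\de'/\de)(\rho'/\rho)$ uniformly on $r\in[\de',1]$. I would then include each $x'\in E'$ in $E_1'$ independently with probability $p$ comparable to $|\log\de|^{-1}(\de/\de')(\rho/\rho')$, so that $\#E_1'\sim p\#E'$ in expectation and the expected count in any $r$-ball is at most a $|\log\de|^{-1}$ fraction of the target.

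Finally, I would verify the target bound for $E_1'$ at every dyadic scale and every $r$-ball by a union bound. When the expected count is $\gtrsim\log(1/\de)$, Chernoff concentration gives the bound with probability $1-O(\de^C)$, which survives the union bound over the $\lesssim r^{-3}$ disjoint $r$-balls at each of the $\lesssim|\log\de|$ dyadic scales. When the expected count is $O(1)$, standard Poisson tail estimates provide the same control, and the target bound (which exceeds an absolute constant throughout $r\in[\de',1]$ because $\rho'\geq\de'$) absorbs small random fluctuations. The main obstacle I expect is the careful patching of these two probabilistic regimes together with the logarithmic loss in $p$; a fully deterministic fallback would be to remove points of $E'$ greedily from the most over-concentrated $r$-balls across dyadic scales and control the total loss by double-counting the removed points against $E$ through the containment map.
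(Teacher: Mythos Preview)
Your proposal is correct and follows essentially the same route as the paper: random sampling with probability comparable to $|\log\de|^{-1}(\de/\de')(\rho/\rho')$, followed by a probabilistic verification that the two-dimensional ball condition survives. The paper's version is terser---it samples from the $\ZR^4$-parameter set $X$ of $\cs$ rather than from $E'$, and leaves the Chernoff/union-bound details implicit---but the substance is identical; your choice to sample from $E'$ (after $\de'$-separating and transferring the ball bound through the containment map) is arguably more faithful to the lemma as stated, since it directly produces $\cs_1'\subset\cs'$.
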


\begin{proof}

Let $X\subset\ZR^4$ be the finite set such that for all $x\in X$, there exists an $S\in\cs$ such $S=S(\ell)$ with $\breve\ell=x$.
In other words, $X$ is the parameter set in $\ZR^4$ of the $(\de,\rho)$-regulus strips $\cs$. 
Notice that the statement ``$\cs$ satisfies the two-dimensional ball condition \eqref{two-dim-ball-2}" is equivalent to the following:
\begin{equation}
    \#(X\cap B_r)\leq \left\{\begin{array}{lr}
    r\de^{-1},     &\text{ if }\de\leq r\leq\rho,\\
    r^2\de^{-1}\rho^{-1},     &\text{if } \rho\leq r\leq 1.
    \end{array}
    \right.
\end{equation}
Let $X'$ be a random sample of $X$ with probability $|\log\de|^{-1}(\de/\de')(\rho/\rho')$. Then with high probability, we have $\#X'\geq |\log\de|^{-1}(\de/\de')(\rho/\rho')\#X$ and 
\begin{equation}
    \#(X'\cap B_r)\leq \left\{\begin{array}{lr}
    r(\de')^{-1}(\rho'/\rho),     &\text{ if }\de\leq r\leq\rho,\\
    r^2(\de\rho)^{-1},     &\text{if } \rho\leq r\leq 1.
    \end{array}
    \right. 
\end{equation}
Let $\cs'$ be the set of $(\de',\rho')$-regulus strips such that for any $S'\in\cs'$, $S'=S'(\ell)$ for some $\breve\ell\in X'$. 
Such $\cs'$ exists since  each $S'\in\cs'$ contains some $S\in\cs$.
Then $\cs'$ satisfies the two-dimensional ball condition \eqref{two-dim-condition-2}.
\end{proof}

\medskip

Recall that in theorem \ref{r-strip-thm}, we need to consider a shading $Y(S)$ for each regulus strip $S\in\cs$. 
The definition of shading can be extended to $(\de,\rho)$-regulus strips naturally.

\begin{definition}
Suppose $\cs$ is a family of $(\de,\rho)$-regulus strips.
Define a {\bf shading} $Y:\cs\to[0,1]^3$ as a map that $Y(S)\subset S\cap[0,1]^3$ for any $S\in\cs$.
\end{definition}

\begin{definition}
Let $(\cs,Y)$ be a family of $(\de,\rho)$-regulus strips and shading.
If for all $S\in\cs$, $|Y(S)|\geq\la |S|$, then we say that $(\cs, Y)$ is a set of regulus strips with a {\bf $\lambda$-dense} shading. 
\end{definition}

\begin{definition}
Let $(\cs,Y)$, $(\cs',Y')$ be two families of $(\de,\rho)$-regulus strips and shading.
If $\cs'\subset\cs$, $Y'(S)\subset Y(S)$ for all $S\in\cs'$, 
and $\sum_{S\in\cs'}|Y'(S)|\gtrapprox\sum_{S\in\cs}|Y(S)|$,
then we say $(\cs',Y')$ is a {\bf refinement} of $(\cs,Y)$.
\end{definition}

Let $\cs$ be a family of $(\de,\rho)$-regulus strips. 
Note that if $x\in S\cap S'$ for two $(\de,\rho)$-regulus strips $S,S'\in\cs$, then $S\cap S'$ contains a horizontal $\de\times\rho\times\de$-tube which contains $x$ (in fact, $S\cap S'$ contains a $\de\times\rho\times\rho$-slab that contains $x$).
This is a consequence of \eqref{para-2}.
As a result, we may think of $\cup_{\cs}$ as a union of $\de\times\rho\times\de$-tubes.

\begin{definition}\label{regularShadingDef}
Let $(\cs,Y)$ be a family of $(\de,\rho)$-regulus strips and shading. 
Suppose that $Y(S)$ is a union of horizontal $\de\times\rho\times\de$-tubes for all $S\in\cs$.
We say $Y$ is {\bf regular} if the following is true.
\begin{enumerate}
    \item $Y(S)=S\cap(\bigcup_{S'\in\cs}Y(S'))$ for any $S\in\cs$.
    \item For all $S\in\cs$, $\#\{P\subset S: Y(S)\cap P\not=\varnothing, \text{ $P$ is a horizontal $\de\times\rho\times\de$-tube}\}$ are the same up to a constant multiple.
\end{enumerate}

\end{definition}

\begin{lemma}\label{lemRegularShading}
Let $(\cs, Y)$ be a set of $(\de,\rho)$-regulus strips and shading. 
Suppose that $Y(S)$ is a union of horizontal $\de\times\rho\times\de$-tubes for all $S\in\cs$.
Then there exists a refinement $(\cs',Y')$ of $(\cs,Y)$ so that $(\cs',Y')$ is a family of $(\de,\rho)$-regulus strips with regular shading.
\end{lemma}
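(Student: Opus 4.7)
The plan is to prove the lemma by two successive dyadic pigeonholings, each losing at most a $|\log\de|^{O(1)}$ factor in the total shading mass.

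\textbf{Stage 1 (enforcing (2)).} For each $S\in\cs$, let $N(S)$ denote the number of horizontal $\de\times\rho\times\de$-tubes $P$ with $Y(S)\cap P\neq\varnothing$. Since $Y(S)$ is a union of such tubes and $|S|\leq 1$ while each $|P|=\de^2\rho$, we have $1\leq N(S)\lesssim\de^{-O(1)}$. Dyadic pigeonholing over $N(S)$ produces a single dyadic value $N$ and a subfamily $\cs_1\subset\cs$ such that $N(S)\sim N$ for every $S\in\cs_1$ and $\sum_{S\in\cs_1}|Y(S)|\gtrapprox\sum_{S\in\cs}|Y(S)|$. This establishes (2) for the pair $(\cs_1, Y|_{\cs_1})$.

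\textbf{Stage 2 (enforcing (1)).} Let $E:=\bigcup_{S\in\cs_1}Y(S)$ be the shading set, itself a union of horizontal tubes. For each such tube $P\subset E$, define a shading multiplicity $m(P):=\#\{S\in\cs_1: P\subset Y(S)\}$ and a containment multiplicity $m'(P):=\#\{S\in\cs_1: P\subset S\}$, so that $m(P)\leq m'(P)$. Condition (1) is equivalent to the requirement $m(P)=m'(P)$ for every $P\subset E$. I will dyadically pigeonhole the tubes so that on a sub-collection $\ch$, both $m(P)\sim M$ and $m'(P)/m(P)\sim K$ are nearly constant; restrict the shading to $\ch$ by setting $Y_2(S):=\bigcup\{P\in\ch:P\subset Y(S)\}$. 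The double-count identity $\sum_{P\in E}m(P)=\sum_{S\in\cs_1}N(S)\sim N|\cs_1|$ certifies that $\sum_S|Y_2(S)|\gtrapprox\sum_S|Y(S)|$. Finally, declare a strip $S\in\cs_1$ a \emph{ghost} if it physically contains some $P\in\ch$ with $P\not\subset Y_2(S)$; set $\cs'$ to be the non-ghost strips and $Y'(S):=Y_2(S)$ for $S\in\cs'$. By construction $(\cs',Y')$ satisfies both (1) and (2).

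\textbf{Main obstacle.} The total number of ghost pairs $(P,S)$ removed is $\sum_{P\in\ch}(m'(P)-m(P))\sim(K-1)\sum_{P\in\ch}m(P)$, so the construction retains an acceptable fraction of shading mass only when the chosen dyadic bin has $K\sim 1$. The technical challenge is to guarantee that such a bin captures a substantial fraction of $E$. I expect this to follow from a further dyadic split on $K$ together with the two-dimensional ball condition on $\cs$: the latter globally upper-bounds $\sum_P m'(P)$ in terms of $\sum_P m(P)$ via the non-concentration of the strips, so that only $O(\log\de^{-1})$ dyadic values of $K$ carry non-negligible mass and the bin $K\sim 1$ survives the pigeonhole. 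The total loss then stays within the permitted $|\log\de|^{O(1)}$ factor.
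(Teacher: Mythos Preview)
Your Stage 2 has a genuine gap, and the fix you propose does not work. The two-dimensional ball condition is \emph{not} a hypothesis of this lemma (look at the statement: it applies to an arbitrary family of $(\de,\rho)$-regulus strips with tube-union shading), so you cannot invoke it. Without it, there is no reason the $K\sim1$ bin should carry a positive fraction of the mass: one can arrange $\sum_P m'(P)\gg\sum_P m(P)$, for instance by having many strips overlap on a region that is shaded for only a few of them. Even granting the $K\sim1$ step, your construction has a second problem: after passing to $\ch$ and deleting ghosts, the per-strip counts $\#\{P\in\ch:P\subset Y(S)\}$ need not be uniform anymore, so condition~(2) is lost and a further pigeonholing would again disturb condition~(1).

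The paper avoids all of this with a one-line observation you are missing: instead of trying to \emph{shrink} the shading to force (1), \emph{enlarge} it. Replace $Y(S)$ by $\tilde Y(S):=S\cap\big(\bigcup_{S'\in\cs}Y(S')\big)$. This costs nothing, because $\bigcup_S\tilde Y(S)=\bigcup_S Y(S)$ and $|\tilde Y(S)|\geq|Y(S)|$; condition~(1) now holds automatically for $(\cs,\tilde Y)$. Then a single dyadic pigeonholing on $\#\{P\subset S:\tilde Y(S)\cap P\neq\varnothing\}$ gives $\cs'\subset\cs$ satisfying (2), and since enlarging before pigeonholing only increases the mass sum, $(\cs',\tilde Y)$ is the desired output. (Strictly speaking $\tilde Y(S)\supset Y(S)$ rather than $\subset$, so this is not a ``refinement'' in the literal sense of the definition; but the only properties used downstream are that the union does not grow and that $\sum|Y'|\gtrapprox\sum|Y|$, both of which hold.)
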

\begin{proof}
If $Y(S)\subsetneq S\cap(\bigcup_{S\in\cs}Y(S))$, update $Y(S)=S\cap(\bigcup_{S\in\cs}Y(S))$.
Apply dyadic pigeonholing on the set $\cs$, we can find a refinement $(\cs',Y)$ of $(\cs,Y)$ so that for all $S\in\cs'$, $\#\{P\subset S: Y(S)\cap P\not=\varnothing, \text{ $P$ is a horizontal $\de\times\rho\times\de$-tube}\}$  are the same up to a constant multiple.
Let $Y'=Y$. 
The family $(\cs',Y')$ is what we need.
\qedhere

\end{proof}

The next Proposition is the key input in the proof of Theorem \ref{r-strip-thm}.
It is built on the ``high-low plus decoupling method" developed in \cite{GGGHMW}.
Our proof partially relies on the argument in \cite{GGGHMW}.

\begin{proposition}
\label{decoupling-prop}
Suppose that $(\cs, Y)$ is a set of $\de$-separated $(\de,\rho)$-regulus strips with a regular, $\la$-dense shading and suppose $\de^{1/2}\geq\rho\geq\de$. 
Then, for any $\e>0$, one of the following must be true:
\begin{enumerate}
    \item[(\bf A)] There exists an  $\al\geq1$ and a set of $(\al\de^{1-\e^2}, \rho_o)$-regulus strips $(\cs', Y')$ with an $\Om(\la)$-dense shading so that $\#\cs'\gtrsim \#\cs$ and 
    \begin{equation}
        \bigcup_{S'\in\cs'}Y'(S')\subset \bigcup_{S\in\cs}Y(S),
    \end{equation}
    where $\rho_o=\max\{\rho, \al\de^{1-\e^2}\}$.
    Moreover, any $S'\in\cs'$ contains some $S\in\cs$.
    \item[(\bf B)] We have the following Kakeya maximal estimate for $(\cs, Y)$:
    \begin{equation}
        \Big|\bigcup_{S\in\cs}Y(S)\Big|\geq c_\e\de^{\e/2}\la^{6}\sum_{S\in\cs}|S|.
    \end{equation}
\end{enumerate}

\end{proposition}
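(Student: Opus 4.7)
The plan is to adapt the high--low plus decoupling framework of \cite{GGGHMW} to the parameter-space wave packets produced by Lemma \ref{rs-wp}. For each $t\in[0,1]$, the slice $X_t(\cs)$ corresponds, via the projection $\pi_t$, to the union of $1\times\rho\times\de$-planks $T_x(t)$ with long direction $e_3(t)$ and secondary direction $\xi'(t)$. Since $\xi$ is a curve of non-vanishing torsion, these planks are tangent to a cone whose dual has the $L^6$ decoupling property; this is the origin of the exponent $6$ on $\la$.

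Using Lemma \ref{lemRegularShading}, first I would assume that $Y$ is regular, then dyadically pigeonhole in $t$ so that on a set of heights of measure $\gtrsim|\log\de|^{-1}$ the slice function $f_t:=\sum_{x\in E(\cs)}\mathbf{1}_{T_x(t)}$ has a prescribed superlevel-set value $\mu\sim\la$. For each such $t$, split $f_t=f_{t,\text{high}}+f_{t,\text{low}}$ by a Fourier cutoff at the transverse scale $s=(\al\de^{1-\e^2})^{-1}$, and pigeonhole in $\al\geq 1$.

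If $f_{t,\text{low}}$ dominates the $\mu$-superlevel set for most $t$, then $\bigcup_{S\in\cs}Y(S)$ is concentrated inside a union of planks of transverse thickness $\al\de^{1-\e^2}$ in the parameter space; lifting via \eqref{para-2}, these assemble into a family of $(\al\de^{1-\e^2},\rho_o)$-regulus strips $\cs'$. Setting $Y'(S')=S'\cap\bigcup_{S\in\cs}Y(S)$ and pigeonholing on the multiplicity so that each $S'$ fattens some $S\in\cs$ yields conclusion (\textbf{A}) with density $\Om(\la)$.

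If instead $f_{t,\text{high}}$ dominates, I would apply $L^6$ cone decoupling to the wave-packet expansion of $f_{t,\text{high}}$ to bound $\|f_{t,\text{high}}\|_6^6\lessapprox\sum_x\|\mathbf{1}_{T_x(t)}\|_6^6$, feed this into Plancherel and Chebyshev to extract $|X_t(\cs)|\gtrapprox\la^6\sum_x|\pi_t T_x(t)|$, and integrate in $t$ using $|S|\sim\rho\de$ and Fubini to reach conclusion (\textbf{B}). The main obstacle is the $t$-dependence of the cone: the direction $e_3(t)$, the generator $\xi'(t)$ and the dual cone all rotate with $t$, so one cannot use a single global Fourier cutoff. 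I would handle this by localizing in height windows of size $\de^{\e^2}$ on which $\xi$ changes by $\ll\rho$, applying the decoupling on each window, and then reassembling; the $\de^{-\e^2}$ factor lost in this reassembly is absorbed into the bookkeeping loss $\de^{\e/2}$ in (\textbf{B}) and into the fattening $\al\de^{1-\e^2}$ in (\textbf{A}).
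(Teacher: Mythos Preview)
Your outline has the right ingredients but applies the cone decoupling in the wrong place, and this is a genuine gap. For a \emph{fixed} height $t$, all of the planks $T_x(t)$ coming from Lemma~\ref{rs-wp} share the same long direction $e_3(t)$ and the same secondary direction $\xi'(t)$; they are mutually parallel, so $\widehat{f_t}$ is supported in a \emph{single} dual plank $\om(t)$. There is no cone to decouple along at a fixed $t$, and your proposed bound $\|f_{t,\mathrm{high}}\|_6^6\lessapprox\sum_x\|\mathbf 1_{T_x(t)}\|_6^6$ carries no content beyond triviality. Your ``main obstacle''---the $t$-dependence of the cone---is in fact the mechanism: the cone is $\bigcup_t\om(t)$, generated precisely by letting $t$ vary.

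The correct order of operations, as in the paper, is to form the single function $F=\sum_{t\in\Si}f_t$ on the parameter space $\ZR^3$ (with $\Si=[0,1]\cap\de\ZZ$), observe from regularity that $F\gtrsim\mu$ on $B(x,\de)$ for every $x\in E(\cs)$ with $\mu\sim\la\de^{-1}$ (this is the number of heights $t$ with $x\in E(\cs_t)$, not a slice multiplicity $\sim\la$ as you wrote), and then perform one global high--low split on $F$ at frequency scale $\de^{-1+\e^2}$. In the high case, $L^6$ decoupling for the cone $\bigcup_t\om(t)$ separates the \emph{different $t$'s} and gives $\int|F_{\mathrm{high}}|^6\lessapprox(\#\Si)^4\sum_t\|f_t\|_6^6$ (via the estimates of \cite{GGGHMW}), which yields (\textbf{B}) after comparison with $\mu^6(\#E(\cs))\de^3$. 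In the low case, $F\ast\vp$ being large at $x$ means that many planks across all $t$ cluster near $x$; a dyadic pigeonhole on the distance annulus around $x$ then picks out the scale $\al$ and produces the fattened $(\al\de^{1-\e^2},\rho_o)$-strips of (\textbf{A}). No localization into height windows is needed.
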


\begin{proof}

Since $Y$ is regular, it is a union of horizontal $\de\times\rho\times\de$-tubes. 
Let $\Si=[0,1]\cap\de\ZZ$, and for each $t\in \Si$, define $\cs_t:=\{S\in\cs: Y(S)\cap\{z_3=t\}\not=\varnothing\}$. 

By Lemma \ref{rs-wp}, there are a set $E(\cs_t)\subset\ZR^3$ and a family of $1\times\rho\times\de$-planks $\{T_x: x\in E(\cs_t)\}$ so that
\begin{equation}
    |X_t(\cs_t)|\sim \Big|\bigcup_{x\in E(\cs_t)} T_x(t)\Big|.
\end{equation}
Therefore, to estimate $|\bigcup_{S\in\cs}Y(S)|$, it suffices to estimate the number of distinct $1\times\rho\times\de$-planks in $\bigcup_{x\in E(\cs_t)} T_x(t)$ for all $t\in\Si$.

\smallskip

Let $\cT_t$ be the collection of distinct $1\times \rho\times\de$-planks in $\bigcup_{x\in E(\cs_t)} T_x(t)$ (two planks $T_1, T_2$ are considered the same if $T_1\subset 2 T_2$ and $T_2\subset 2T_1$). 
For each $T\in\cT_t$, let $\Id_T^\ast$ be a bump function adapted to T so that $\wh\Id_T^\ast$ is supported in the dual $1\times\rho^{-1}\times\de^{-1}$-plank associated with $T$ and containing the origin. 
Denote by $\om=\om(t)$ this dual plank, so the shortest direction of $\om$ is $e_3$, and the second shortest direction of $\om$ is $(e_3/|e_3|)'$ (recall \ref{e1e2n}). 
Note that $\om(t)$ is tangent to the conical surface $\Ga=\{r\cdot e_3(s)\wedge (e_3(s)/|e_3(s)|)': |r|\leq \de^{-1}, s\in[0,1]\}$ along the line $\text{span} \{e_3(t)\wedge (e_3(t)/|e_3(t)|\}$.

\smallskip

Recall \eqref{para-set-R3} that $E(\cs)\subset\ZR^3$ is the corresponding parameter set of $\cs$. 
Since $Y$ is a regular shading,  $\#\{P\subset S: Y(P)\not=\varnothing\}$ are the same up to a constant for all $S\in\cs$. 
Denote by $\mu$ this uniform number.
Since $Y(S)$ is $\la$-dense, we have
\begin{equation}
\label{mu}
    \mu\sim \#\{P\subset S: Y(P)\not=\varnothing\}\gtrsim\la\de^{-1}, \hspace{.5cm}\forall S\in\cs.
\end{equation}
It follows that any $x\in E(\cs)$ belongs to $\gtrsim\mu$ copies of $E(\cs_t)$. Thus, if we let
\begin{equation}
\label{f-t}
    f_t=\sum_{T\in\cT_t}\Id_T^\ast,
\end{equation}
then $\mu\de^3\lesssim\int_{B^3(x,\de)}\sum_{t\in\Si} f_t$ for all $x\in E(\cs)$. 
Let $\vp\geq0$ be so that $\wh\vp$ is a bump for $B^3_{\de^{-1+\e^2}}$ and let $\delta_0$ be the Dirac delta measure concentrated at the origin.
Thus, by partitioning $\wh f_t=\wh f_t\wh \vp+\wh f_t(1-\wh \vp)$ for all $f_t$ and by the triangle inequality,
\begin{align}
    \mu\de^3\lesssim\int_{B^3(x,\de)}\sum_{t\in\Si} f_t&\leq\int_{B^3(x,\de)}\big|\sum_{t\in\Si} f_t\ast\vp\big|+\int_{B^3(x,\de)}\big|\sum_{t\in\Si} f_t\ast(\delta_0-\vp)\big|\\
    &=I(x)+II(x).
\end{align}

\smallskip

We consider two separate cases: {\bf (i)} $2|I(x)|\geq|I(x)+II(x)|$ for $\geq \# E(\cs)/2$ points $x$. 
{\bf (ii)} $2|I(x)|\geq|I(x)+II(x)|$ for $\leq \# E(\cs)/2$ points $x$.

\medskip

Suppose that $2|I(x)|\geq|I(x)+II(x)|$ for $\geq \# E(\cs)/2$ points $x$. 
Denote this set of points by $E_1$. Then for any $x\in E_1$,  
\begin{equation}
    \mu\de^3\lesssim\int_{B^3(x,\de)}\sum_{t\in\Si} f_t\lesssim \int_{B^3(x,\de)}\big|\Big(\sum_{t\in\Si} f_t\Big)\ast\vp\big|.
\end{equation}
For each $k\in\ZN$, consider the interval
\begin{equation}
    A_k=\{y\in\ZR:\de^{1-\e^2}2^{k-2}\leq|y|\leq\de^{1-\e^2}2^{k-1} \},
\end{equation}
with $A_{-2}:=\varnothing$. 
Recall the definition of $\pi_t$ at \eqref{pi-t}. 
For each $k$ and each $t\in\Si$, let 
\begin{equation}
    \cT_{t}(k)=\{T\in\cT_t: \dist(\pi_t(T)-\pi_t(x))\in A_k\}
\end{equation}
be the family of $1\times\rho\times\de$-planks that is $\sim \de^{1-\e^2}2^k$ away from $x$.
The partition $\cT_t=\bigcup_k\cT_t(k)$ gives
\begin{equation}
    \mu\de^3\lesssim\int_{B^3(x,\de)}\big|\Big(\sum_{t\in\Si} f_t\Big)\ast\vp\big|\leq\sum_k\int_{B^3(x,\de)}\big|\Big(\sum_{t\in\Si} \sum_{T\in\cT_t(k)}\Id_T^\ast\Big)\ast\vp\big|.
\end{equation}
Since $\vp(x)\lesssim_\e \de^{3\e^2-3}(1+|\de^{\e^2-1}x|)^{-4\e^{-3}}$, the support of $\Id_T^\ast\ast\vp$ is essentially a $\de^{1-\e^2}\times\rho\times1$-plank when $\rho\geq\de^{1-\e^2}$, and is essentially a $\de^{1-\e^2}\times\de^{1-\e^2}\times1$-tube when $\rho\leq\de^{1-\e^2}$. 
Let $\be=\de^{1-\e^2}\rho^{-1}$ if $\rho\leq\de^{1-\e^2}$, and $\be=1$ if $\rho\geq\de^{1-\e^2}$. 
Thus 
\begin{equation}
    \mu\de^{-\e^2}\be\lesssim\sum_k\Big(\sum_{t\in\Si}\#\cT_{t}(k)\Big)2^{-4k\e^{-3}}.
\end{equation}
By pigeonholing there exists a $k$ such that
\begin{equation}
    \mu\de^{-\e^2}\beta2^{2k}\lesssim\mu\de^{-\e^2}\be2^{4k\e^{-3}}/k^2\lesssim\sum_{t\in\Si}\#\cT_{t}(k).
\end{equation}

Let $\al=2^k$ and let $\rho_o=\max\{\rho, \al\de^{1-\e^2}\}$, so that $\al\be\geq(\rho_o/\rho)$. 
For each $t\in\Si$, let $T_x'(t)$ be the $1\times\rho_o\times\al\de^{1-\e^2}$-plank containing $x$, with directions given by $e_3, (e_3/|e_3|)', e_3\wedge (e_3/|e_3|)'$, as in the statement of Lemma \ref{rs-wp}. 
Then, the above estimate implies 
\begin{equation}
\label{low-outcome}
    \sum_{t\in\Si}\#\{T\in\cT_t:T\subset T_x'\}\gtrsim\mu\de^{-\e^2}\beta2^{2k}=\mu\de^{-\e^2}\beta\al^2\geq\mu\al\de^{-\e^2}(\rho_o/\rho).
\end{equation}
By reversing the argument in Lemma \ref{rs-wp}, we know that $T_x'(t)$ corresponds to an $\al\de^{1-\e^2}\times\rho_o$-tube on the horizontal plane $\{z_3=t\}$ in the physical space,
which, recalling \eqref{regulus} and \eqref{para-2}, is the intersection between the plane $\{z_3=t\}$ and the $(\al\de^{1-\e^2}, \rho_o)$-regulus strip $S_x':=N_{\al\de^{1-\e^2}}(R(\ell_x))\cap N_{\rho_o}(\ell_x)$.

Recall that $\cT_t$ is the collection of distinct $1\times\rho\times\de$-planks in $\bigcup_{x\in E(\cs_t)} T_x(t)$.
Thus, each $1\times\rho\times\de$-plank in $\cT_t$ corresponds to a horizontal $\de\times\rho\times\de$-tube $P_t(S)$ at height $t$ so that $ P_t(S)\subset Y(S)$ for some $S\in\cs$.
Therefore, by \eqref{low-outcome}, the $(\al\de^{1-\e^2}, \rho_o)$-regulus strip $S'_x$ contains $\gtrsim \mu\al\de^{-\e^2}(\rho_o/\rho)$ distinct horizontal $\de\times\rho\times\de$-tubes $P$ with $Y(S)\cap P\not=\varnothing$ for some $S\in\cs$.
Let $Y'(S_x)$ be the union of these $\de\times\rho\times\de$-tubes, which defines a shading for $S_x'$.
Via \eqref{mu}, we have
\begin{align}
    |Y(S'_x)|\gtrsim\mu\al\de^{-\e^2}(\rho_o/\rho)\cdot\de|S|\gtrsim\la\big(\al\de^{-\e^2}(\rho_o/\rho)|S|\big)\gtrsim\la|S'_x|.
\end{align}
Denote by $\cs'=\{S'_x:x\in E_1\}$. We conclude case ${\bf (A)}$.

\medskip

Suppose on the other hand that $2|I(x)|\geq|I(x)+II(x)|$ for $\leq \# E(\cs)/2$ points $x$. Then $2|II(x)|\geq|I(x)+II(x)|$ for $\geq \# E(\cs)/2$ points $x$. 
Denote this set of points by $E_2$. 
Then for any $x\in E_2$, $\mu\de^3\lesssim\int_{B^3(x,\de)}|(\sum_{t\in\Si} f_t)\ast(\delta_0-\vp)|$.
By Minkowski's inequality and H\"older's inequality, 
\begin{equation}
    \mu^6(\#E_2)\de^3 \lesssim\int\Big|\sum_{t\in\Si} f_t\ast(\delta_0-\vp)\Big|^6.
\end{equation}

We are going to make further frequency decomposition for the function $\wh{f_t}(1-\wh\vp)$, similar to \cite{GGGHMW} Definition 3. 
When $\rho=\de$, the situation considered here is identical to that in \cite{GGGHMW}, after rescaling.
Recall that $\wh{f_t}(1-\wh\vp)$ is supported in $\om(t)\setminus B^3_{\de^{-1+\e^2}}$, a punctured $1\times\rho^{-1}\times\de^{-1}$-plank.
In particular, when $\rho\geq\de^{1-\e^2}$, $\om(t)\setminus B^3_{\de^{-1+\e^2}}$ is a union of two $1\times\rho^{-1}\times\de^{-1}$-planks $\om^+, \om^-$, each of which is $\de^{-1+\e^2}$-away from the origin. 
Both $\om^+$ and $\om^-$ have $e_3$ as their shortest direction and $(e_3/|e_3|)'$ as their second shortest direction. 
For simplicity, denote by $v_1=e_3$, $v_2=(e_3/|e_3|)'$, and $v_3=e_3\wedge (e_3/|e_3|)'$. 

\smallskip

Similar to \cite{GGGHMW} Definition 3, we define
\begin{equation}
    P_{t,high}=\{\sum_{j=1}^3\xi_jv_j:|\xi_1|\leq1,\,\de^{\e^2-1}\leq|\xi_2|\leq\de^{-1},\,  |\xi_3|\leq\de^{-1}\},
\end{equation}
and for a dyadic number $\nu\in[\de^{-1/2}, \de^{\e^2-1}]$, define $P_{t,\nu}$, with the convention $\nu/2=0$ when $\nu/2<\de^{-1/2}$,  as (see Figure 1 in \cite{GGGHMW})
\begin{equation}
    P_{t,\nu}=\{\sum_{j=1}^3\xi_jv_j:|\xi_1|\leq1,\,\nu/2\leq|\xi_2|\leq\nu,\, \de^{\e^2-1}\leq|\xi_3|\leq\de^{-1}\}. 
\end{equation}

Hence $\om(t)\setminus B^3_{\de^{-1+\e^2}}\subset P_{t,high}\cup\big(\bigcup_{\nu}P_{t,\nu}\big)$. 
Let $\{\vp_{high},\{\vp_\nu\}\}$ be bump functions associated to $\{P_{t,high},\{P_{t,\nu}\}_\nu\}$ obeying
\begin{equation}
    \wh\vp_{high}+\sum_\nu\wh\vp_\nu=1
\end{equation}
on $\om(t)\setminus B^3_{\de^{-1+\e^2}}$. Thus, by the triangle inequality, 
\begin{align}
\nonumber
    \mu^6(\#E_2)\de^3 &\lesssim\int\Big|\sum_{t\in\Si} f_t\ast(\delta_0-\vp)\ast\vp_{high}\Big|^6+\sum_\nu\int\Big|\sum_{t\in\Si} f_t\ast(\delta_0-\vp)\ast\vp_\nu\Big|^6\\
    \label{two-cases}
    &:=I+II.
\end{align}

Suppose $\mu^6(\#E_2)\de^3 \lesssim II$. 
Apply the first inequality of equation (47) in \cite{GGGHMW} to the $\de^{-1}$-dilate of each $\nu$-term in $II$ so that (with the following choice of parameters: $t=1$, $\la=\nu\de$, $\Theta=\Si$, $\#\ZT=\sum_{t\in\Si}\#\cT_t$)
\begin{equation}
\label{after-ggghmw}
     \int\Big|\sum_{t\in\Si} f_t\ast(\delta_0-\vp)\ast\vp_\nu\Big|^6\lesssim \de^3 \de^{-O(\e^2)}(\#\Si)^2\nu^2(\nu\de)^{-1}\sum_{t\in\Si}\#\cT_t(\de^{-2}\rho).
\end{equation}
The expression $(\#\ZT)\de^{-1}$ in \cite{GGGHMW} is replaced by $\sum_{t\in\Si}\#\cT_t(\de^{-2}\rho)$ here for the following reason: In our situation, after a $\de^{-1}$-dilate, $\cT_t$ (recall \eqref{f-t}) becomes a set of $\de^{-1}\times\rho\de^{-1}\times 1$-planks, each of which has volume $\de^{-2}\rho$.
Whereas in \cite{GGGHMW}, their corresponding $\cT_t$ is a set of $\de^{-1}\times1\times1$-tubes, each of which has volume $\de^{-1}$ (see also the first inequality in the line above equation (47) in \cite{GGGHMW}).
Hence, regarding the last factor in \eqref{after-ggghmw}, we have $\de^{-2}\rho$ instead of $\de^{-1}$.

Note that $\sum_{t\in\Si} f_t\ast(\delta_0-\vp)\ast\vp_\nu=0$ when $\nu\gtrsim\rho^{-1}$.
Recall $\mu\gtrsim\la\de^{-1}$ and $\#\Si\sim\de^{-1}$.
Sum up all $\nu\lesssim\rho^{-1}$ in \eqref{after-ggghmw} so that
\begin{equation}
    \la^6\de^{-3}(\# E_2)\lesssim\mu^6\de^3(\#E_2) \lesssim\sum_\nu\de^{-O(\e^2)}\de^{-2}\sum_{t\in\Si}\#\cT_t.
\end{equation}
Since $\# \nu\lessapprox1$, since $2(\#E_2)\geq\#E(\cs)$, and since $\#E(\cs)=\#\cs$, we thus get
\begin{equation}
\label{after-decoup}
    \sum_{t\in\Si}\#\cT_t\gtrsim_\e \de^{O(\e^2)}\la^6\de^{-1}(\#\cs).
\end{equation}
Reversing the argument in Lemma \ref{rs-wp}, we know that the left-hand side of \eqref{after-decoup} counts the number of distinct $\de\times\rho\times\de$-tubes in $\bigcup_{S\in\cs}Y(S)$. 
Thus, we have
\begin{equation}
\label{first-case}
    \Big|\bigcup_{S\in\cs}Y(S)\Big|\gtrsim \rho\de^2\sum_{t\in\Si}(\#\cT_t)\gtrsim_\e \de^{O(\e^2)}\la^{6}\rho\de(\#\cs)=\de^{O(\e^2)}\la^{6}\sum_{S\in\cs}|S|.
\end{equation}

Suppose $\mu^6|E_2|\de^3\lesssim I$ in \eqref{two-cases}. 
Then $\rho\leq\de^{1-\e^2}$, since otherwise, $f_t\ast(\delta_0-\vp)\ast\vp_{high}=0$. 
It was shown in \cite{GGGHMW} equation (35) that (take $\Theta=\Si$)
\begin{align}
    \int\Big|\sum_{t\in\Si} &f_t\ast(\delta_0-\vp)\ast\vp_{high}\Big|^6 \!\lesssim\de^{-O(\e^2)}(\#\Si)^4\sum_{t\in\Si}\int\Big| f_t\ast(\delta_0-\vp)\ast\vp_{high}\Big|^6\\
    &\lesssim\de^{-O(\e^2)}(\#\Si)^4\sum_{t\in\Si}\Big\|\sum_{T\in\cT_t}\Id_T^\ast\Big\|_6^6\lesssim\de^{-O(\e^2)}(\#\Si)^4\sum_{t\in\Si}\big((\de\rho)\#\cT_t\big).
\end{align}
Since $\rho\leq\de^{1-\e^2}$, similar to the calculation in the first case, we have 
\begin{equation}
\label{second-case}
    \Big|\bigcup_{S\in\cs}Y(S)\Big|\gtrsim_\e \de^{O(\e^2)}\la^{6}\rho\de(\#\cs)=\de^{O(\e^2)}\la^{6}\sum_{S\in\cs}|S|.
\end{equation}

\eqref{first-case} and \eqref{second-case} give case ${\bf (B)}$.\qedhere

\end{proof}

\medskip

Finally, we use Proposition \ref{decoupling-prop} to prove Theorem \ref{r-strip-thm}. 
Via Corollary \ref{regulus-strip-repara}, Theorem \ref{r-strip-thm} is a consequence of the following theorem by taking $\rho=\de^{1/2}$.

\begin{theorem}
\label{main-thm}
Let $\de\leq\rho\leq\de^{1/2}$. Suppose that $(\cs, Y)$ is a set of $\de$-separated $(\de,\rho)$-regulus strips with a $\la$-dense shading, and $\cs$ obeys the two-dimensional ball condition \eqref{two-dim-ball-2}. 
Then for any $\e>0$,
\begin{equation}
\label{main-esti}
    \Big|\bigcup_{S\in\cs}Y(S)\Big|\geq c_\e\de^{\e}\la^6\sum_{S\in\cs}|S|.
\end{equation}

\end{theorem}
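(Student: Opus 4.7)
The strategy is to prove Theorem \ref{main-thm} by induction on the scale $\delta$ along a dyadic sequence, with Proposition \ref{decoupling-prop} as the engine. Fix $\e > 0$; the goal is to produce $c_\e > 0$ such that \eqref{main-esti} holds for all $\delta \in (0,1)$ and all $\rho \in [\delta, \delta^{1/2}]$. For the base case $\delta \geq \delta_0(\e)$, the two-dimensional ball condition applied at $r = 1$ forces $\#\cs \lesssim \delta^{-1}\rho^{-1}$, so $\sum_{S\in\cs}|S| \lesssim 1$; on the other hand, $|\bigcup_{S\in\cs} Y(S)| \geq \lambda|S_0| = \lambda\delta\rho$ for any single $S_0 \in \cs$, and choosing $c_\e$ small enough (depending on $\delta_0$) makes the desired inequality trivial.

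For the inductive step, first preprocess $(\cs, Y)$ by replacing $Y(S)$ with $\tilde Y(S) := \bigcup\{P \subset S : |P \cap Y(S)| \geq (\lambda/2)|P|\}$, the union of horizontal $\delta \times \rho \times \delta$-tubes in $S$ carrying a positive density of $Y(S)$. A Fubini argument shows $|\tilde Y(S)| \geq (\lambda/2)|S|$, so at the cost of a constant factor in $\lambda$ we may assume $Y(S)$ is a union of such tubes and is $\Omega(\lambda)$-dense. Then Lemma \ref{lemRegularShading} produces a refinement with regular shading at the cost of a $|\log\delta|^{-1}$-factor in cardinality, easily absorbed later. Now apply Proposition \ref{decoupling-prop}: in case (\textbf{B}) one immediately obtains $|\bigcup_{S\in\cs} Y(S)| \geq c_\e \delta^{\e/2}\lambda^6 \sum_{S\in\cs}|S|$, which is stronger than the target.

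In case (\textbf{A}) we inherit a family $(\cs', Y')$ of $(\delta', \rho')$-regulus strips with $\delta' = \alpha\delta^{1-\e^2} \geq \delta^{1-\e^2}$, $\rho' = \rho_o = \max\{\rho, \delta'\}$, $\#\cs' \gtrsim \#\cs$, density $\Omega(\lambda)$, and $\bigcup_{\cs'} Y'(S') \subset \bigcup_{\cs} Y(S)$. The dyadic range $\alpha \lesssim \delta^{-(1-\e^2)}$ (implicit in the proof of Proposition \ref{decoupling-prop}) ensures $\rho' \leq (\delta')^{1/2}$, so the new parameters lie in the scope of the theorem. Apply Lemma \ref{two-dim-lem} to pass to $\cs_1' \subset \cs'$ of size $\#\cs_1' \geq |\log\delta|^{-1}(\delta/\delta')(\rho/\rho')\#\cs'$ satisfying the two-dimensional ball condition at scale $(\delta', \rho')$. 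The inductive hypothesis applied to $(\cs_1', Y'|_{\cs_1'})$ then yields
\[ \Bigl|\bigcup_{S\in\cs} Y(S)\Bigr| \geq \Bigl|\bigcup_{S'\in\cs_1'} Y'(S')\Bigr| \geq c_\e (\delta')^\e (c\lambda)^6 \sum_{S'\in\cs_1'}|S'| \gtrsim c_\e(\delta')^\e\lambda^6|\log\delta|^{-1}\sum_{S\in\cs}|S|, \]
using that $|S'|\cdot\#\cs_1' \gtrsim |\log\delta|^{-1}\delta\rho\#\cs$. Since $(\delta')^\e \geq \delta^{\e(1-\e^2)} = \delta^\e\cdot\delta^{-\e^3}$, closing the induction reduces to the elementary bound $\delta^{-\e^3} |\log\delta|^{-1} \geq 1$, which holds whenever $\delta \leq \delta_0(\e)$ because the polynomial gain dominates the logarithmic loss.

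The main obstacle has already been dispatched inside Proposition \ref{decoupling-prop} (the high-low plus decoupling analysis). What remains is bookkeeping, whose two delicate points are (i) maintaining the two-dimensional ball condition under the coarsening from $(\delta,\rho)$ to $(\delta',\rho')$, which is exactly the content of Lemma \ref{two-dim-lem}, and (ii) verifying that the polynomial gain $\delta^{-\e^3}$ per inductive step absorbs the $|\log\delta|^{-1}$ losses from the refinement steps. Both are routine once the structural ingredients above are combined.
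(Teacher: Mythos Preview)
Your overall strategy coincides with the paper's: induction on the scale, driven by Proposition~\ref{decoupling-prop}, with Lemma~\ref{lemRegularShading} for regularization and Lemma~\ref{two-dim-lem} to propagate the two-dimensional ball condition to the coarser scale. The case-(\textbf{A})/(\textbf{B}) bookkeeping, the verification that $\rho_o\leq(\alpha\de^{1-\e^2})^{1/2}$, and the closing of the induction via $\de^{-\e^3}$ dominating the logarithmic losses are all correct and match the paper.

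There is, however, a gap in your preprocessing step. Your $\tilde Y(S)$ is a union of whole tubes $P$ and therefore \emph{enlarges} the shading: in general $\tilde Y(S)\not\subset Y(S)$, so $\bigcup_S\tilde Y(S)$ may properly contain $\bigcup_S Y(S)$, and a lower bound on the former does not transfer to the latter. One can recover a link by noting that each tube $P$ appearing in $\bigcup_S\tilde Y(S)$ satisfies $|P\cap E|\geq(\la/2)|P|$ with $E=\bigcup_S Y(S)$; summing over the essentially disjoint tubes then gives $|E|\gtrsim\la\,|\bigcup_S\tilde Y(S)|$, but this produces $\la^7$ rather than the claimed $\la^6$. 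The paper sidesteps this loss by pigeonholing \emph{globally} on the density $|P\cap E|/|P|\sim 2^{-k}$ (with $E$ fixed, not $Y(S)$), selecting the dominant $k$, and observing that the resulting tube-shading $\tilde Y_0$ is $\Om^\ast(2^k\la)$-dense; the tube-case inequality then yields a factor $(2^k\la)^6$ while the passage from $\bigcup \tilde Y_0$ back to $E$ costs only $2^{-k}$, for a net $2^{5k}\geq 1$.
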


\begin{proof}

If $\la\leq\de$, then $|Y(S)|\sim\la\de\rho\geq c_\e\de^{\e}\la^6\sum_{S\in\cs}|S|$ since $\#\cs\lesssim\de^{-3/2}$, so there is nothing to prove.
Let us assume $\la\geq\de$.

\smallskip

We first reduce \eqref{main-esti} to the case when $Y(S)$ is a union of horizontal $\de\times\rho\times\de$-tubes for all $S\in\cs$.
Let $E=\bigcup_{S\in\cs}Y(S)$.
Recall that $S\cap S'$ is a union of horizontal $\de\times\rho\times\de$-tubes  for any two $(\de,\rho)$-regulus strips $S,S'\in\cs$.
Let $\cp$ be the family of horizontal $\de\times\rho\times\de$-tubes that for any $P\in\cp$, $P\subset S$ for some $S\in\cs$.
Partition $\cp$ into a disjoint union of $\cp_k$ such that for all $P\in\cp_k$, $|P\cap E|\sim 2^{-k}|P|$. 

For each $S\in\cs$, let $Y_0'(S)=S\bigcap E\bigcap(\bigcup_{2^{-k}\geq\de^2}\cup_{\cp_k})$. 
Since $\la\geq\de$, $|Y_0'(S)|\geq|Y(S)|/2$.
By pigeonholing, there exists a $k$ and a new shading $Y_0$ so that $(\cs,Y_0)$ is a refinement of $(\cs, Y)$, where $Y_0(S)=S\bigcap E\bigcap\cup_{\cp_k}$ for each $S\in\cs$.
By dyadic pigeonholing, there is a refinement $(\cs_0, Y_0)$ of $(\cs, Y_0)$ such that $(\cs_0, Y_0)$ has $\Om^\ast(\la)$-dense shading.

For each $S\in\cs_0$, consider a new shading $\tilde Y_0(S)=\cs\bigcap \cup_{\cp_k}$ by horizontal $\de\times\rho\times\de$-tubes. 
Since $|P\cap E|\sim 2^{-k}|P|$ for all $P\in\cp_k$, $\tilde Y_0$ is $\Om^\ast(2^k\la)$-dense.
Now $(\cs_0,\tilde Y_0)$ is a set of $\de$-separated $(\de,\rho)$-regulus strips that $\tilde Y_0(S)$ is a union of horizontal $\de\times\rho\times\de$-tubes for all $S\in\cs_0$, and $\tilde Y_0$ is $\Om^\ast(2^k\la)$-dense.
Apply \eqref{main-esti} so that 
\begin{equation}
    \Big|\bigcup_{S\in\cs}Y(S)\Big|\gtrsim 2^{-k}\Big|\bigcup_{S\in\cs_0}\tilde Y_0(S)\Big|\gtrapprox c_\e\de^{\e}2^{5k}\la^6\sum_{S\in\cs_0}|S|.
\end{equation}
Since $k\geq1$, this proves Theorem \ref{main-thm}, assuming that it is true when $Y(S)$ is a union of horizontal $\de\times\rho\times\de$-tubes for all $S\in\cs$.

\medskip

From now on, let us assume $Y(S)$ is a union of horizontal $\de\times\rho\times\de$-tubes for all $S\in\cs$.
We will prove it by induction on the pair $(\de,\rho)$. 
Apply Lemma \ref{lemRegularShading} to $(\cs,Y)$ to get a refinement $(\cs_1,Y_1)$ of $(\cs,Y)$ so that $(\cs_1,Y_1)$ is a set of $(\de,\rho)$-regulus strips with a regular, $\Om^\ast(\la)$-dense shading.

Apply Proposition \ref{decoupling-prop} to $(\cs_1, Y_1)$, so there are two cases.

\smallskip

If case $(\bf B)$ holds, then 
\begin{align}
    \Big|\bigcup_{S\in\cs}Y(S)\Big| \geq\Big|\bigcup_{S\in\cs_1}Y_1(S)\Big|\gtrapprox c_\e\de^{\e/2}\la^6\sum_{S\in\cs_1}|S|\geq c_\e\de^{\e}\la^6\sum_{S\in\cs}|S|.
\end{align}
This concludes Theorem \ref{main-thm}.

\smallskip

Now let us assume case $(\bf A)$ holds. 
Recall $\rho_o=\max\{\rho,\al\de^{1-\e^2}\}$ for some $\al\geq1$.
In this case, we obtain a set of $(\al\de^{1-\e^2}, \rho_o)$-regulus strips $(\cs_2, Y_2)$ with an $\Om(\la)$-dense shading so that the following is true:
\begin{enumerate}
    \item $\#\cs_2\gtrsim \#\cs_1$.
    \item Each $S_2\in\cs_2$ contains some $S_1\in\cs_1$.
    \item We have 
    \begin{equation}
    \label{Y2}
    \bigcup_{S\in\cs_2}Y_2(S)\subset \bigcup_{S\in\cs_1}Y_1(S).
    \end{equation}
\end{enumerate}

Apply Lemma \ref{two-dim-lem} with $(\cs,\cs')=(\cs_1,\cs_2)$ to find an $\Om^\ast(\al\de^{-\e^2}(\rho_o/\rho))$-refinement $\cs_2'$ of $\cs_2$ so that $\cs_2'$ is a family of $(\al\de^{1-\e^2},\rho_o)$-regulus strips that satisfies the two-dimensional ball condition \eqref{two-dim-ball-2}. 
Apply Theorem \ref{main-thm} at scale $(\al\de^{1-\e^2}, \rho_o)$ to $(\cs_2',Y_2)$ so that 
\begin{equation}
    \Big|\bigcup_{S'\in\cs_2'}Y_2(S')\Big|\geq c_\e\la^6\al^{\e}\de^{\e(1-\e^2)}\sum_{S'\in\cs_2'}|S'|\gtrapprox \de^{-\e^3}c_\e\de^\e\la^6\sum_{S\in\cs_1}|S|,
\end{equation}
which, together with \eqref{Y2}, yields
\begin{equation}
    \Big|\bigcup_{S\in\cs}Y(S)\Big| \geq\Big|\bigcup_{S\in\cs_1}Y_1(S)\Big|\geq c_\e\de^{\e}\la^6\sum_{S\in\cs}|S|.
\end{equation}
This concludes Theorem \ref{main-thm}.
\end{proof}

\bigskip 

\section{An explanation for Lemma \ref{rs-wp}}
\label{explanation}

In the proof of Lemma \ref{rs-wp}, we find that $v$ is (surprisingly) parallel to $\xi'$ (see below \eqref{above-parallel}). 
Here we attempt to explain this coincidence in a more general setting.

\smallskip

Let $\ga:[0,1]\to\ZR^3$ be a spatial curve. 
The special case $\ga=(-t,t^2,1)$ corresponds to the $\SL_2$ lines studied in the previous section (see \eqref{e1e2n}). 
Denote by $\ga(t)=\ga=(\ga_1,\ga_2,\ga_3)$. Note that $\ga^\perp$ is spanned by the vectors 
\begin{equation}
\label{e1e2-2}
    e_1=(\ga_3, 0, -\ga_1),\hspace{.5cm}e_2=(\ga_2, -\ga_1,0).
\end{equation}

Let $E$ be a set in the parameter space. 
For $x=(x_1,x_2,x_3)\in E$, introduce an affine-modified projection
\begin{equation}
\label{projection}
    \pi(x)=(x_1\ga_3-x_3\ga_1,\, x_1\ga_2-x_2\ga_1)
\end{equation}
and a family of curves $\cC=\{C_x:x\in E\}$ in the physical space, where
\begin{equation}
\label{c-x}
    C_x=(\pi(x),t)=(x_1\ga_3-x_3\ga_1,\, x_1\ga_2-x_2\ga_1,\, t),\hspace{1cm}t\in[0,1].
\end{equation}

For a point $z$ in the physical space, denote $\cC(z)=\{C\in\cC:z\in C\}$. 
Then the claim $C_x\in\cC(z)$ gives the following equations
\begin{equation}
    \left\{\begin{array}{c} 
    x_1\ga_3(z_3)-x_3\ga_1(z_3)=z_1,\\
    x_1\ga_2(z_3)-x_2\ga_1(z_3)=z_2. 
    \end{array}\right.
\end{equation}
Hence the curves in $\cC(z)$ can be parameterized as
\begin{align}
\label{curves-intersects-z}
    &\{s\cdot(\ga_3-\ga_3(z_3)\ga_1(z_3)^{-1}\ga_1,\,\ga_2-\ga_2(z_3)\ga_1(z_3)^{-1}\ga_1,\, 0)\\ \nonumber
    &+(z_1\ga_1(z_3)^{-1}\ga_1, \,z_2\ga_1(z_3)^{-1}\ga_1,\, t),\,\, t\in[0,1]\,\}_{s\in\ZR},
\end{align}
which implies that the tangent vector of curves in $\cC(z)$ at the point $z$ all lie in a plane, spanned by the two vectors 
\begin{align}
\label{plane-map}
    &(\ga_3'(z_3)-\ga_3(z_3)\ga_1(z_3)^{-1}\ga_1'(z_3),\,\ga_2'(z_3)-\ga_2(z_3)\ga_1(z_3)^{-1}\ga_1'(z_3),\, 0)\\ \nonumber
    &\text{ and }(z_1\ga_1(z_3)^{-1}\ga_1'(z_3),\, z_2\ga_1(z_3)^{-1}\ga_1'(z_3),\, 1).
\end{align}

The intersection between the plane spanned by \eqref{plane-map} and the horizontal plane $\{t=z_3\}$ is important to us. 
It is a line that intersects $z$ with a normal vector $n(z_3)$, where the normal vector $n$ is defined as (here $\ga_1,\ga_2$, $\ga_3$ all depend on $t$)
\begin{equation}
\label{normal-vector}
    n(t)=n=\ga_1^{-1}(-\ga_1\ga_2'+\ga_2\ga_1',\,\ga_1\ga_3'-\ga_3\ga_1',\, 0).
\end{equation}

\begin{remark}
\rm

From \eqref{normal-vector} we can define a ``regulus strip" $S_x$ of dimensional $\de\times\de^{1/2}\times1$ associated to a curve $C_x$ (c.f. (1.5) in \cite{KWZ}) as
\begin{equation}
    S_x=\{z+un(z_3)^\perp:z=(z_1,z_2,z_3)\in C_x,\, |u|\leq\de^{1/2}\}+B^3_\de.
\end{equation}
It is possible that using the approach developed in \cite{KWZ}, this observation will lead to an alternative proof of the restricted projection theorem in \cite{GGGHMW},
avoiding using Fourier analysis.

\end{remark}

The vector $n$ introduced in \eqref{normal-vector} corresponds to the vector $(1,-t,0)$ in \eqref{para-2} (see also below \eqref{ga-prime}) when $\ga=(-t,t^2,1)$. 
Let $v_1=v_1(t)$ be the directional vector orthogonal to $n$ (inside the horizontal plane):
\begin{equation}
    v_1=(\ga_1\ga_3'-\ga_3\ga_1',\,\ga_1\ga_2'-\ga_2\ga_1'). 
\end{equation}
Observe that $v_1=\pi(\ga')$, where $\pi$ is the projection introduced in \eqref{projection}. 
This means that the vector $(0,-t,t^{-1})$ in \eqref{l-x-prime} corresponds $\ga'$ when $\ga=(-t,t^2,1)$. 
Since the vector $e_3$ defined in \eqref{e1e2n} corresponds to $\ga$, the claim that $v$ is parallel to $\xi'=(e_3/|e_3|)'$ (see below \eqref{above-parallel}) is equivalent to the claim that $\ga, \ga', (\ga/|\ga|)'$ are coplanar.
On the other hand, we have
\begin{lemma}
The three vectors $\ga, \ga', (\ga/|\ga|)'$ are coplanar. 
\end{lemma}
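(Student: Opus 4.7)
My plan is to observe that the claim follows from a one-line computation using the product rule, because $(\gamma/|\gamma|)'$ automatically lies in the two-dimensional subspace spanned by $\gamma$ and $\gamma'$.

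Concretely, I would write $\gamma/|\gamma| = |\gamma|^{-1}\,\gamma$ and differentiate as a product of the scalar function $|\gamma|^{-1}$ and the vector function $\gamma$. This yields
\begin{equation}
    \Big(\frac{\gamma}{|\gamma|}\Big)' \;=\; \frac{\gamma'}{|\gamma|} \;+\; \Big(|\gamma|^{-1}\Big)'\,\gamma \;=\; \frac{1}{|\gamma|}\,\gamma' \;-\; \frac{|\gamma|'}{|\gamma|^{2}}\,\gamma,
\end{equation}
where $|\gamma|' = \langle \gamma,\gamma'\rangle/|\gamma|$ is a scalar. Thus $(\gamma/|\gamma|)'$ is a scalar linear combination of $\gamma$ and $\gamma'$, which is exactly the statement that the three vectors are coplanar (they all lie in $\operatorname{span}(\gamma,\gamma')$, a subspace of dimension at most two).

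There is essentially no obstacle: the identity is purely algebraic, valid at every parameter $t$ for which $\gamma(t)\neq 0$, and requires no special structure of $\gamma$ beyond differentiability and non-vanishing. The only conceptual remark worth adding is that this explains the earlier ``coincidence'' in Lemma \ref{rs-wp}: for any smooth non-vanishing curve $\gamma$, differentiating its radial projection onto the unit sphere never produces a component outside the instantaneous plane $\operatorname{span}(\gamma,\gamma')$, because the radial component is exactly what is subtracted off. The particular curve $\gamma(t)=(1,t^{2},-t)$ arising from the $\mathrm{SL}_{2}$ setting is therefore not needed — the conclusion is a general fact.
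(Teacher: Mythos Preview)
Your proof is correct and is essentially identical to the paper's: the paper simply writes $(\ga/|\ga|)'=\ga'/|\ga|-\ga(\ga\cdot\ga')/|\ga|^3$, which is precisely your product-rule computation with $|\ga|'=\langle\ga,\ga'\rangle/|\ga|$ substituted in. Both arguments exhibit $(\ga/|\ga|)'$ as an explicit linear combination of $\ga$ and $\ga'$, and your additional remark that this works for any smooth non-vanishing curve is exactly the point of Section~\ref{explanation}.
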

\begin{proof}
Just need to notice that $(\ga/|\ga|)'=\ga'/|\ga|-\ga(\ga\cdot\ga')/|\ga|^3$.
\end{proof}

\bigskip

\bibliographystyle{alpha}
\bibliography{bibli}

\end{document}